\definecolor{donkergroen}{RGB}{46,148,0}
\definecolor{donkerrood}{RGB}{204,0,0}
\definecolor{blauw}{RGB}{61,158,255}
\definecolor{donkerblauw}{RGB}{0,0,255}
\definecolor{donkergroen}{RGB}{46,148,0}
\definecolor{donkerrood}{RGB}{204,0,0}
\newif\if@borderstar
\def\bordermatrix{\@ifnextchar*{%
\@borderstartrue\@bordermatrix@i}{\@borderstarfalse\@bordermatrix@i*}%
}
\def\@bordermatrix@i*{\@ifnextchar[{\@bordermatrix@ii}{\@bordermatrix@ii[()]}}
\def\@bordermatrix@ii[#1]#2{%
\begingroup
\m@th\@tempdima8.75\p@\setbox\z@\vbox{%
\def\cr{\crcr\noalign{\kern 2\p@\global\let\cr\endline }}%
\ialign {$##$\hfil\kern 2\p@\kern\@tempdima & \thinspace %
\hfil $##$\hfil && \quad\hfil $##$\hfil\crcr\omit\strut %
\hfil\crcr\noalign{\kern -\baselineskip}#2\crcr\omit %
\strut\cr}}%
\setbox\tw@\vbox{\unvcopy\z@\global\setbox\@ne\lastbox}%
\setbox\tw@\hbox{\unhbox\@ne\unskip\global\setbox\@ne\lastbox}%
\setbox\tw@\hbox{%
$\kern\wd\@ne\kern -\@tempdima\left\@firstoftwo#1%
\if@borderstar\kern2pt\else\kern -\wd\@ne\fi%
\global\setbox\@ne\vbox{\box\@ne\if@borderstar\else\kern 2\p@\fi}%
\vcenter{\if@borderstar\else\kern -\ht\@ne\fi%
\unvbox\z@\kern-\if@borderstar2\fi\baselineskip}%
\if@borderstar\kern-2\@tempdima\kern2\p@\else\,\fi\right\@secondoftwo#1 $%
}\null \;\vbox{\kern\ht\@ne\box\tw@}%
\endgroup
}
\newcommand\mynobreakpar{\par\nobreak\@afterheading} 
\newcommand{\C}{\mathbb{C}}
\newcommand{\R}{\mathbb{R}}
\newtheorem{theorem}{Theorem}[section]
\newtheorem{lemma}[theorem]{Lemma}
\newtheorem{proposition}[theorem]{Proposition}
\newtheorem{corollary}[theorem]{Corollary}
\theoremstyle{definition}
\newtheorem*{examp*}{Example}
\theoremstyle{plain}
\newcounter{thm}[section]
\title{{\large \textbf{PARTITION FUNCTIONS AND A GENERALIZED COLORING-FLOW DUALITY FOR EMBEDDED GRAPHS}}}
\author{{\normalsize Bart Litjens\thanks{University of Amsterdam. Email: \texttt{b.m.litjens@uva.nl }} \hspace{1mm}and Bart Sevenster \thanks{University of Amsterdam. Email: \texttt{blsevenster@gmail.com}}}\footnote{Korteweg-De Vries Institute for Mathematics, University of Amsterdam, Amsterdam, The Netherlands. The research leading to these results has received funding from the European Research Council under the European Union's Seventh Framework Programme (FP7/2007-2013) / ERC grant agreement n$^{\circ}$ 339109.}}
\date{\vspace{-5ex}}
\begin{document}
\maketitle

\noindent {\small \textbf{Abstract.} Let $G$ be a finite group and $\chi: G \rightarrow \C$ a class function. Let $H = (V,E)$ be a directed graph with for each vertex a cyclic order of the edges incident to it. The cyclic orders give a collection $F$ of faces of $H$. Define the partition function $P_{\chi}(H) \coloneqq \sum_{\kappa: E \rightarrow G}\prod_{v \in V}\chi(\kappa(\delta(v)))$, where $\kappa(\delta(v))$ denotes the product of the $\kappa$-values of the edges incident with $v$ (in cyclic order), where the inverse is taken for edges leaving $v$. Write $\chi = \sum_{\lambda}m_{\lambda}\chi_{\lambda}$, where the sum runs over irreducible representations $\lambda$ of $G$ with character $\chi_{\lambda}$ and with $m_{\lambda} \in \C$ for every $\lambda$. When $H$ is connected, it is proved that $P_{\chi}(H) = |G|^{|E|}\sum_{\lambda}\chi_{\lambda}(1)^{|F|-|E|}m_{\lambda}^{|V|}$, where $1$ is the identity element of $G$. Among the corollaries, a formula for the number of nowhere-identity $G$-flows on $H$ is derived, generalizing a result of Tutte. We show that these flows correspond bijectively to certain proper $G$-colorings of a covering graph of the dual graph of $H$. This correspondence generalizes coloring-flow duality for planar graphs.}

\noindent \\\textbf{Key words:} embedded graph, coloring-flow duality, representation, partition function\\
\noindent \textbf{MSC 2010:} 05C10, 05C15, 05C21, 20C15, 57M10

\section{Introduction}\label{section:Introduction}

Let $H = (V,E)$ be a directed graph (not necessarily simple) with for each vertex $v$ a cyclic order $\phi_{v}$ of the edges incident to $v$. A loop appears twice in the cyclic order. For a vertex $v$, let $d(v)$ denote the total degree of $v$. If $v$ is a vertex that is not isolated, we fix an edge $e_v^{1}$ adjacent to $v$ and let $e_v^{i+1}$ denote the edge incident to $v$ that in the cyclic order of $v$ comes after $e_v^{i}$, for $i = 1,...,d(v)-1$. For $v \in V$, let $\sigma_{v}: \{e_v^{1},...,e_v^{d(v)}\} \rightarrow \{\pm 1\}$ be the function that takes the value $-1$ on edges that are directed \textit{outwards} at $v$ and $1$ on the edges that are directed \textit{inwards} at $v$. A \textit{face} of $H$ is given by a closed walk with the following properties: the start vertex is the end vertex, successive edges $uv, vw$ (with $u,v,w \in V$) in the walk are successive in the cyclic order $\phi_v$ at $v$, and each edge $uv$ in the walk is traversed at most once going from $u$ to $v$ and at most once going from $v$ to $u$.\\
\indent Let $G$ be a finite group and $\chi: G \rightarrow \C$ a \textit{class function} of $G$, i.e., a function that is constant on conjugacy classes of $G$. Define the \textit{partition function} $P_{\chi}$ with respect to $\chi$ and evaluated at $H$ to be 
\begin{equation}\label{equation:partfunction}
P_{\chi}(H) \coloneqq \sum_{\kappa: E \rightarrow G}\prod_{v \in V}\chi(\kappa(\phi_{v}^{\sigma_{v}})),
\end{equation}
where the sum runs over all maps $\kappa: E \rightarrow G$ and where $\kappa(\phi_{v}^{\sigma_{v}}) \coloneqq \prod_{i=1}^{d(v)}\kappa(e_v^i)^{\sigma_{v}(e_v^i)}$. For a vertex $v$, the choice for $e_v^1$ is irrelevant. Indeed, if $g_1,...,g_d \in G$ for some $d \geq 1$, then the product $g_2\dots g_d$ is the conjugate of $g_1\dots g_d$ by $g_1$ and hence the class function $\chi$ agrees on both products. This shows that the partition function is well-defined. Let $f$ be an edge of $H$. If $f$ is a loop at the vertex $v$ and appears in positions $i$ and $j$ in $\phi_v$, then $\sigma_v(e_v^i) = -\sigma_v(e_v^j)$. If $f = uv$, with $u$ and $v$ different vertices, then $\sigma_v(f) = -\sigma_u(f)$. Let $H_f$ be the directed graph obtained from $H$ by reversing the direction of $f$. Consider a map $\kappa: E \rightarrow G$ on $H$. The map $\kappa_f: E \rightarrow G$ on $H_f$ defined by $\kappa_f(e) = \kappa(e)$ for all $e \neq f$, and $\kappa_f(f) = -\kappa(f)$, has the same weight as $\kappa$. This shows that the partition function depends only on the underlying undirected graph of $H$.\\
\indent It is a standard fact from representation theory that one can write $\chi = \sum_{\lambda}m_{\lambda}\chi_{\lambda}$, where the sum runs over \textit{irreducible representations} $\lambda$ of $G$ with \textit{character} $\chi_{\lambda}$ and with $m_{\lambda} \in \C$ for every $\lambda$. In this paper we give a closed formula for the partition function (\ref{equation:partfunction}) and derive three corollaries by taking special cases for $H$ and $\chi$. In Section \ref{section:formula} it will be proved that if $H$ is connected, it holds that
\begin{equation}\label{equation:partfunction2}
P_{\chi}(H) = |G|^{|E|}\sum_{\lambda}\chi_{\lambda}(1)^{|F|-|E|}m_{\lambda}^{|V|},
\end{equation}
where $F$ denotes the collection of faces of $H$ and $1$ is the identity element of $G$. For every $\lambda$, the coefficient $\chi_{\lambda}(1)$ equals the dimension of the representation $\lambda$. \\
\indent For nonnegative integers $k$ and $g$, the Generalized Frobenius Formula \cite[Thm. \hspace{-2mm}$3$]{zagier04} counts homomorphisms satisfying a certain constraint from the fundamental group of a $k$-fold punctured closed orientable surface of genus $g$ to $G$. In Section \ref{section:counthom} formula (\ref{equation:partfunction2}) with a special choice of class function and graph is used to give a new proof of the case $k=1$ of the Generalized Frobenius Formula.\\
\indent In this paper we will consider nowhere-identity $G$-flows on $H$. These flows extend the notion of nowhere-zero flows in a finite abelian group. The latter were shown \cite{tutte54} to be counted by a polynomial in the size of the group. In order to define $G$-flows for a nonabelian group $G$, the cyclic orders of edges around vertices as defined at the beginning of the introduction, are used.\\
\indent For the character $\chi_{\text{reg}}$ of the \textit{regular representation} of $G$, the partition function $P_{\chi_{\text{reg}}}(H)$ counts the number of \textit{G-flows} on $H$. In Section \ref{section:countflow} a formula for the number of nowhere-identity $G$-flows of $H$ is deduced from equation (\ref{equation:partfunction2}) by inclusion-exclusion. Such a formula was independently found in \cite{goodall16} by essentially the same method as ours, by deriving formula (\ref{equation:partfunction2}) for the special case $\chi = \chi_{\text{reg}}$. As the authors of this paper also mention, in the combinatorial literature there has been little attention given to $G$-flows when $G$ is nonabelian beyond the work of \cite{devos00}.\\
\indent There is a well-known correspondence between a graph $H$ having a cyclic order of edges around each vertex and a cellular embedding of $H$ in a closed orientable surface $S$, variously attributed to Heffter, Ringel and Edmonds (see Theorem \ref{theorem:ehr} below). In Section \ref{section:duality} it is proved that nowhere-identity $G$-flows of $H$ in $S$ correspond bijectively to certain proper $G$-colorings of a (finite) covering graph of $H^*$, the geometric dual of $H$ in $S$. This correspondence generalizes coloring-flow duality for planar graphs, first proved by Tutte in \cite{tutte54}. Our generalization differs from that of \cite[Corollary $1.5$]{devos05}, which gives an ``approximate coloring-flow duality" for $\R$-valued flows and tensions.

\section{The partition function}\label{section:partitionfunction}

\subsection{Preliminaries on representation theory}\label{subsection:repth}

In this section we review some of the representation theory of finite groups. More details and proofs of the results stated may be found in the first two chapters of \cite{serre77}. Furthermore, some notation is introduced that is used in Sections  \ref{section:formula} and \ref{section:countflow}.\\
\indent Let $G$ be a finite group. A \textit{representation} of $G$ is a homomorphism $\rho: G \rightarrow \text{GL}(V)$, where $V$ is a finite-dimensional $\C$-vector space, called the \textit{representation space}, and $\text{GL}(V)$ is the group of invertible linear maps from $V$ to itself, which can be identified with the invertible square matrices of size $\text{dim}(V)$. In this paper all representations $\rho$ are assumed to be \textit{unitary}, i.e., $\rho(g)$ is a unitary matrix for all $g \in G$.\\
\indent If $\rho$ and $\psi$ are representations of $G$ with representation spaces $V$ and $W$ respectively, then a \textit{G-equivariant map} from $V$ to $W$ is a linear map $\phi: V \rightarrow W$ such that $\phi(\rho(g) \cdot v) = \psi(g) \cdot \phi(v)$ for all $g \in G$ and $v \in V$. The representations $\rho$ and $\psi$ are \textit{isomorphic} if there exists a bijective $G$-equivariant map from $V$ to $W$. \\
\indent A representation $\rho: G \rightarrow \text{GL}(V)$ is called \textit{irreducible} if there is no nontrivial subspace $U$ of $V$ such that $\rho(g) \cdot U = U$ for all $g \in G$. Every representation is isomorphic to a direct sum of irreducible representations. The number of pairwise nonisomorphic irreducible representations of $G$ equals the number of conjugacy classes of $G$.\\
\indent Associated to a representation $\rho$ is the function $\chi: G \rightarrow \C$, called the \textit{character} of $\rho$, defined for $g \in G$ by $\chi(g) \coloneqq \text{tr}(\rho(g))$, where \text{tr} stands for the trace. Two representations are isomorphic if and only if their characters are the same. The character of a representation is a class function and the complex space of class functions is equipped with an inner product $\langle \cdot, \cdot \rangle$ defined by
\[
\langle \chi_1,\chi_2\rangle \coloneqq \frac{1}{|G|}\sum_{g \in G}\chi_1(g)\chi_2(g^{-1}),
\]
for class functions $\chi_1$ and $\chi_2$. The characters of the irreducible representations form an orthogonal basis with respect to this inner product. The orthogonality relation below is key for our proof of formula (\ref{equation:partfunction2}). In the following, let $\delta_{ab}$ denote the \textit{Kronecker delta function} equal to $1$ if $a=b$ (when integers) or $a$ is isomorphic to $b$ (when representations), and zero otherwise.
\begin{theorem}[Schur orthogonality]\label{theorem:schur}
Let $\rho$ and $\psi$ be irreducible (unitary) representations of $G$ with characters $\chi_{\rho}$ and $\chi_{\psi}$, then
\[
\sum_{g \in G}\rho(g^{-1})_{n,m}\cdot\psi(g)_{n',m'} = \delta_{nm'}\delta_{mn'}\delta_{\rho\psi}\frac{|G|}{\chi_{\rho}(1)},
\]
for all indices $n,m,n',m' = 1,...,\chi_{\rho}(1)$.
\end{theorem}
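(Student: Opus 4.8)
The plan is to derive both cases of the identity from Schur's lemma by the standard group-averaging trick. For an arbitrary linear map $B: V_{\psi} \rightarrow V_{\rho}$ between the two representation spaces, I would form the averaged map
\[
\Phi_B \coloneqq \sum_{g \in G} \rho(g^{-1})\, B\, \psi(g).
\]
A one-line reindexing $g \mapsto hg$ shows that $\rho(h)\Phi_B = \Phi_B\, \psi(h)$ for every $h \in G$, so $\Phi_B$ is a $G$-equivariant map from $\psi$ to $\rho$. Schur's lemma (available from the references in Section \ref{subsection:repth}) then splits the argument into two cases: if $\rho$ and $\psi$ are non-isomorphic, every equivariant map between them vanishes, so $\Phi_B = 0$; if $\rho = \psi$ (which is what $\delta_{\rho\psi}=1$ means once a fixed system of irreducible representatives is chosen), then $\Phi_B = c_B \cdot I$ for a scalar $c_B$, and taking the trace gives $c_B = \tr(\Phi_B)/\chi_{\rho}(1)$.

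Next I would specialize $B$ to a matrix unit in order to read off individual entries. Taking $B = E_{m,n'}$, the matrix with a single $1$ in row $m$ and column $n'$, a direct expansion of the matrix product gives
\[
(\Phi_B)_{n,m'} = \sum_{g \in G}\rho(g^{-1})_{n,m}\,\psi(g)_{n',m'},
\]
which is exactly the left-hand side of the claimed formula. In the non-isomorphic case this vanishes, matching the factor $\delta_{\rho\psi}$ on the right. In the case $\rho = \psi$ I would compute the scalar: since $\tr(\rho(g^{-1}) B \rho(g)) = \tr(B)$ for each $g$, one gets $\tr(\Phi_B) = |G|\,\tr(E_{m,n'}) = |G|\,\delta_{mn'}$, hence $c_B = |G|\,\delta_{mn'}/\chi_{\rho}(1)$. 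Combining this with $(\Phi_B)_{n,m'} = c_B\,\delta_{nm'}$ yields $\delta_{nm'}\delta_{mn'}|G|/\chi_{\rho}(1)$, as required.

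The only genuinely delicate point is the index bookkeeping: one must place the matrix unit so that the two free indices of $\Phi_B$ land on the outer positions $n$ and $m'$ while the fixed indices $m$ and $n'$ of $B$ appear as the inner contraction indices; a different placement would scramble the Kronecker deltas on the right-hand side. The entire representation-theoretic content is carried by Schur's lemma, and I note that unitarity is not actually needed for this particular form of the statement, since the sum is already written in terms of $\rho(g^{-1})$ rather than complex conjugates of $\rho(g)$. I expect no further obstacles once the matrix unit is chosen correctly.
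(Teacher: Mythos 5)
Your proof is correct: averaging the matrix unit $E_{m,n'}$ to produce an intertwiner and invoking Schur's lemma is the standard argument, and the index bookkeeping checks out, with $(\Phi_{E_{m,n'}})_{n,m'}$ equal to the left-hand side and $\tr(E_{m,n'})=\delta_{mn'}$ supplying the scalar. The paper itself gives no proof of this theorem---it is quoted as a known fact with a reference to Serre, where essentially this same proof appears---so there is nothing to compare beyond noting that you have correctly reproduced the classical argument, including the necessary caveat that $\delta_{\rho\psi}=1$ must be read as equality of $\rho$ and $\psi$ within a fixed system of irreducible representatives (for merely isomorphic matrix representations the averaged operator is a multiple of the intertwining matrix rather than of the identity, and the entrywise formula would fail).
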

\noindent It can be proved that the dimension $\chi_{\rho}(1)$ divides $|G|$.\\
\indent An important representation is the \textit{regular representation} $\rho_{\text{reg}}$. Let $\C[G]$ be the \textit{group algebra} of $G$, defined as the vector space over $\C$ with basis $G$. The product in $G$ defines by linear extension the structure of an algebra on $\C[G]$. Then the regular representation is defined by 
$\rho_{\text{reg}}: G \rightarrow \text{GL}(\C[G]), g \mapsto L_g$,where $L_g$ is the map that denotes left multiplication by $g$ in $\C[G]$. Its character $\chi_{\text{reg}}$ is $|G|$ on the identity element of $G$ and is zero elsewhere. It decomposes as $\sum_{\lambda}\chi_{\lambda}(1)\chi_{\lambda}$, where the sum runs over all irreducible representations $\lambda$ of $G$.

\subsection{A formula}\label{section:formula}

In this section we derive formula (\ref{equation:partfunction2}) for the partition function $P_{\chi}(H)$ of $H$ defined in equation (\ref{equation:partfunction}). This involves a direct computation that relies mostly on Schur orthogonality and the observation that the faces of $H$ determine which matrix coefficients give non-zero contributions. The proof is essentially the same as in \cite{goodall16}.\\
\indent With the notation and assumptions introduced at the beginning of Section \ref{section:Introduction}, we have the following result:
\begin{theorem}\label{theorem:partfunctionformula}
If $H$ is connected, the partition function $P_{\chi}(H)$ equals
\[
P_{\chi}(H) = |G|^{|E|}\sum_{\lambda}\chi_{\lambda}(1)^{|F|-|E|}m_{\lambda}^{|V|},
\]
where the sum runs over all irreducible representations $\lambda$ of $G$.
\end{theorem}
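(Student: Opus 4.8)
The plan is to expand $\chi$ into irreducible characters at every vertex, interchange the resulting sums with the sum over $\kappa$, and then collapse the edge-sums using Schur orthogonality (Theorem~\ref{theorem:schur}). Substituting $\chi = \sum_{\lambda} m_{\lambda}\chi_{\lambda}$ into (\ref{equation:partfunction}) and expanding the product over vertices gives
\[
P_{\chi}(H) = \sum_{(\lambda_v)_{v \in V}} \Big(\prod_{v \in V} m_{\lambda_v}\Big) \sum_{\kappa \colon E \to G} \prod_{v \in V} \chi_{\lambda_v}\big(\kappa(\phi_v^{\sigma_v})\big),
\]
where each $\lambda_v$ ranges over the irreducible representations of $G$, i.e.\ the outer sum is over all ways of assigning an irreducible representation to each vertex. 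The task is thus reduced to evaluating, for each fixed assignment $(\lambda_v)_v$, the inner sum over $\kappa$.

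For a fixed assignment I would write each $\chi_{\lambda_v}(\kappa(\phi_v^{\sigma_v}))$ as the trace of the matrix product $\prod_{i} \rho_{\lambda_v}(\kappa(e_v^i))^{\sigma_v(e_v^i)}$ and expand each trace into matrix coefficients, introducing one summation index per \emph{corner} of $v$ (the gap between two cyclically consecutive edges). Each edge $e$ then contributes exactly two matrix coefficients, one at each endpoint, carrying $\kappa(e)$ at one end and $\kappa(e)^{-1}$ at the other because $\sigma_u(e) = -\sigma_v(e)$ (a loop contributes both coefficients at its single vertex, again with opposite signs). Summing over $\kappa$ edge by edge, each such pair is evaluated by Theorem~\ref{theorem:schur}, which yields a factor $|G|/\chi_{\lambda_v}(1)$, a delta $\delta_{\lambda_u\lambda_v}$ forcing the irreducibles at the two endpoints to agree, and two further deltas identifying the corner-indices across $e$. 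Since $H$ is connected, the deltas $\delta_{\lambda_u\lambda_v}$ over all edges force every $\lambda_v$ to equal one common irreducible $\lambda$; this is the sole place connectedness is used, and the assignment sum collapses to a single sum over $\lambda$ with coefficient $m_{\lambda}^{|V|}$ and total scalar factor $(|G|/\chi_{\lambda}(1))^{|E|}$.

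What remains is a factor $\chi_{\lambda}(1)^{c}$, where $c$ is the number of equivalence classes of corners under the identifications imposed by the edge-deltas, each class contributing one free index ranging over $\{1,\dots,\chi_{\lambda}(1)\}$. The combinatorial heart of the argument, and the step I expect to demand the most care, is to show that these classes are precisely the faces of $H$, so that $c = |F|$: one verifies that the identification ``corner before $e$ at $u$ $\leftrightarrow$ corner after $e$ at $v$'' is exactly the rule that traces out the closed walks defining a face, checking that loops and parallel edges are handled correctly by the same bookkeeping. Granting $c = |F|$, each $\lambda$ contributes $(|G|/\chi_{\lambda}(1))^{|E|}\chi_{\lambda}(1)^{|F|} = |G|^{|E|}\chi_{\lambda}(1)^{|F|-|E|}$, and summing against $m_{\lambda}^{|V|}$ gives formula (\ref{equation:partfunction2}). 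The remaining delicate points are the sign bookkeeping of $\sigma_v$ when matching $\rho(g)$ against $\rho(g^{-1})$ in the orthogonality relation, and making the corner-to-face correspondence fully rigorous in the presence of loops and multiple edges.
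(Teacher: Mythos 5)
Your proposal is correct and follows essentially the same route as the paper's proof: expand $\chi$ into irreducible characters vertex by vertex, expand each trace into matrix coefficients indexed by corners, apply Schur orthogonality edge by edge to force a single common irreducible (using connectivity) and to produce the factor $(|G|/\chi_{\lambda}(1))^{|E|}$, and identify the equivalence classes of corner-indices with the faces to obtain $\chi_{\lambda}(1)^{|F|}$. The corner-to-face bookkeeping you flag as the delicate step is exactly the point the paper handles via its ``dominant index'' argument along each facial walk.
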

\begin{proof}
Let $\Lambda \coloneqq \{\lambda \mid m_{\lambda} \neq 0\}$. We calculate that
\begin{align}\label{align:berek}
P_{\chi}(H) &= \sum_{\kappa: E \rightarrow G}\prod_{v \in V}\chi(\kappa(\phi_{v}^{\sigma_{v}}))\nonumber\\
&= \sum_{\kappa: E \rightarrow G}\prod_{v \in V}\sum_{\lambda}m_{\lambda}\chi_{\lambda}(\kappa(\phi_{v}^{\sigma_{v}}))\nonumber\\
&= \sum_{\kappa: E \rightarrow G}\sum_{\mu: V \rightarrow \Lambda}\prod_{v \in V}m_{\mu(v)}\chi_{\mu(v)}(\kappa(\phi_{v}^{\sigma_{v}})).
\end{align}
For a fixed $\mu: V \rightarrow \Lambda$, we get a contribution
\begin{align}\label{align:berek2}
\sum_{\kappa: E \rightarrow G}\prod_{v \in V}m_{\mu(v)}\chi_{\mu(v)}(\kappa(\phi_{v}^{\sigma_{v}})) &= \sum_{\kappa: E \rightarrow G}\prod_{v \in V}m_{\mu(v)}\chi_{\mu(v)}(\prod_{i=1}^{d(v)}\kappa(e_v^i)^{\sigma_{v}(e_v^i)})\nonumber\\
&= \sum_{\kappa: E \rightarrow G}\prod_{v \in V}m_{\mu(v)}\text{tr}(\prod_{i=1}^{d(v)}\mu(v)(\kappa(e_v^i)^{\sigma_{v}(e_v^i)}))\nonumber\\
&= \sum_{\kappa: E \rightarrow G}\prod_{v \in V}m_{\mu(v)}\sum_{j_1,...,j_{d(v)}=1}^{\chi_{\mu(v)}(1)}\prod_{i=1}^{d(v)}(\mu(v)(\kappa(e_v^i)^{\sigma_{v}(e_v^i)}))_{j_{i},j_{i+1}},
\end{align}
where we used that the representation $\mu(v)$ is a homomorphism in the second equality, expanded the trace of a product of matrices in the third equality and where the subindices $i$ of $j_{i}$ are taken modulo $d(v)$ for all $i$.\\
\indent Let $e \in E$ be an edge that is directed from $u$ to $v$. Then in expression (\ref{align:berek2}) there is a term of the form
\begin{equation}\label{equation:uitdrukking}
\sum_{a,b = 1}^{\chi_{\mu(u)}(1)}\sum_{c,d = 1}^{\chi_{\mu(v)}(1)}\sum_{\kappa: \{e\} \rightarrow G}(\mu(u)(\kappa(e)^{-1}))_{a,b}\cdot(\mu(v)(\kappa(e)))_{c,d}.
\end{equation}
Hence, by Theorem \ref{theorem:schur} and by connectivity of $H$, we can assume that $\mu: V \rightarrow \Lambda$ is a constant function, as otherwise expression (\ref{equation:uitdrukking}) is zero.\\
\indent Let $\mathcal{F}$ be a face of $H$ with facial walk $W_{\mathcal{F}}$ consisting of the edges $e_1,...,e_r$ (in this order) for some $r \geq 1$. We may assume that $e_i$ is directed from $v_i$ to $v_{i+1}$ for $i=1,...,r-1$ and that $e_{r}$ is directed from $v_{r}$ to $v_{1}$. Fix $\lambda$ and let $\mu: V \rightarrow \Lambda$ be the constant function defined by $\mu(v) = \lambda$ for all $v \in V$. In expression (\ref{align:berek2}) the following term occurs
\begin{equation}\label{equation:indicess}
\sum_{j_1,...,j_{4r} = 1}^{\chi_{\lambda}(1)}\sum_{\kappa: \{e_1,...,e_r\} \rightarrow G}\prod_{i=1}^{r}(\lambda(\kappa(e_i)^{-1}))_{j_{4i},j_{4i+1}}(\lambda(\kappa(e_i)))_{j_{4i+2},j_{4i+3}},
\end{equation}
where the subindices $i$ of $j_i$ are taken modulo $4r$ for all $i$. Since $\mathcal{F}$ is a facial walk, we have that $j_{4i} = j_{4i-1}$ for all $i$. If the term (\ref{equation:indicess}) is nonzero, by Theorem \ref{theorem:schur} we must have that $j_{4i} = j_{4i+3}$ and $j_{4i+1} = j_{4i+2}$ for all $i$. Hence there is an index that is dominant, i.e., that occurs in every matrix coefficient in the above product.\\
\indent For each $\lambda$, applying the above for every face $\mathcal{F}$ yields $|F|$ indices, each ranging from $1$ to $\chi_{\lambda}(1)$. We thus see that the faces of $H$ determine the matrix coefficients that give nonzero contributions to the partition function. For every edge $e$, we need only consider the matrix coefficient whose indices correspond to the faces on either side of $e$. Using expressions (\ref{align:berek}) and (\ref{align:berek2}) we compute
\begin{align*}
P_{\chi}(H) &=  \sum_{\kappa: E \rightarrow G}\sum_{\mu: V \rightarrow \Lambda}\prod_{v \in V}m_{\mu(v)}\chi_{\mu(v)}(\kappa(\phi_{v}^{\sigma_{v}}))\\
&= \sum_{\lambda}\sum_{\kappa: E \rightarrow G}\prod_{v \in V}m_{\lambda}\chi_{\lambda}(\kappa(\phi_{v}^{\sigma_{v}}))\\
&= \sum_{\lambda}m_{\lambda}^{|V|}\chi_{\lambda}(1)^{|F|}\sum_{\kappa: E \rightarrow G}\prod_{e \in E} (\lambda(\kappa(e)^{-1}))_{1,1}(\lambda(\kappa(e)))_{1,1}\\
&= \sum_{\lambda}m_{\lambda}^{|V|}\chi_{\lambda}(1)^{|F|}\prod_{e \in E}\sum_{\kappa: \{e\} \rightarrow G}(\lambda(\kappa(e)^{-1}))_{1,1}(\lambda(\kappa(e)))_{1,1}\\
&= |G|^{|E|}\sum_{\lambda}\chi_{\lambda}(1)^{|F|-|E|}m_{\lambda}^{|V|},
\end{align*}
where in the third equality we used that every choice of indices between $1$ and $\chi_{\lambda}(1)$ gives the same contribution by Theorem \ref{theorem:schur} and in the last equality Theorem \ref{theorem:schur} is used again.
\end{proof}

\section{Consequences of Theorem \ref{theorem:partfunctionformula}}\label{section:consequences}

\subsection{Preliminaries on surfaces and the fundamental group}\label{subsection:surfaces}

We assume some familiarity with the theory of surfaces and the fundamental group. A brief review of the theory of surfaces and the fundamental group is given here. Further information on these topics can be found in \cite{gross87} and \cite{hatcher02}.\\
\indent A \textit{surface} is a connected topological space that is Hausdorff and such that every point has an open neighborhood that is homeomorphic to an open disc in $\R^2$. A \textit{closed surface} is a surface that is compact. If a surface admits a triangulation by oriented triangles such that for any edge of a triangle on the surface the orientations of the neighboring triangles are opposite, it is called \textit{orientable}. A surface is \textit{oriented} if such a triangulation is given. A closed orientable surface is homeomorphic to the sphere with $g$ handles attached to it, for some $g \geq 0$ which is called the \textit{genus}.\\
\indent Let $X$ be a topological space. A \textit{loop} is a continuous function $f$ from the unit interval $[0,1]$ in $\R$ to $X$ such that $f(0) = f(1)$. If $x \in X$, then a loop $f$ is \textit{based at} $x$ if $f(0) = f(1) = x$. Two loops $f_0$ and $f_1$ are said to be \textit{homotopic} if there exists a continuous function $F: [0,1] \times [0,1] \rightarrow X$ such that $F(t,0) = f_0$  and $F(t,1) = f_1$ for all $t \in [0,1]$. \\
\indent Fix $x \in X$. Being homotopic defines an equivalence relation on the set of loops based at $x$. The set of equivalence classes forms a group with respect to concatenation of loops, called the \textit{fundamental group of }X (based at $x$) and denoted by $\pi_1(X,x)$. If $X$ is path-connected then $\pi_1(X,x)$ and $\pi_1(X,y)$ are isomorphic for $y \in X$, which justifies the notation $\pi_1(X)$ in this case.

\subsection{Counting homomorphisms}\label{section:counthom}

In this section a first consequence of Theorem \ref{theorem:partfunctionformula} is presented. Fix a $g \geq 0$ and let $S_g$ be the closed orientable surface of genus $g$. Fix a point $x \in S_g$ and remove it from $S_g$. The remaining surface is denoted $S_{g,1}$ and is called the \textit{punctured surface of genus g}. Its fundamental group can be described by a set of generators subject to one relation
\begin{equation}\label{equation:presentation}
\pi_1(S_{g,1}) = \langle a_1,...,a_g,b_1,...,b_g,c \mid \prod_{i=1}^{g}[a_i,b_i] = c^{-1} \rangle,
\end{equation}
where $[x,y] = xyx^{-1}y^{-1}$ denotes the commutator of $x$ and $y$.\\
\indent Let $G$ be a finite group with identity element $1$ and let $C$ be a conjugacy class of $G$. We define 
\[
A_g(G,C) := |\{(x_1,...,x_g,y_1,...,y_g,z) \in G^{2g} \times C \mid \prod_{i=1}^{g}[x_i,y_i] = z^{-1}\}|.
\]
Then we have that
\[
A_g(G,C) = |\{\phi: \pi_1(S_{g,1}) \rightarrow G \mid \phi(c) \in C\}|,
\]
where the $\phi$ that are counted are assumed to be group homomorphisms and $c$ is the generator in the presentation (\ref{equation:presentation}) of $\pi_1(S_{g,1})$. The parameter $A_g(G,C)$ has a geometric interpretation in terms of coverings of $S_{g,1}$, see the appendix by Zagier in \cite{zagier04}.\\
\indent The following theorem, which is a special case of the Generalized Frobenius Formula \cite[Thm. $3$]{zagier04}, is derived from Theorem \ref{theorem:partfunctionformula}. The proof given in \cite{zagier04} relies on Schur orthogonality and computing the trace of the action by left multiplication with a central element on the group algebra in two different ways. Our proof below only implicitly uses Schur orthogonality, via the proof of Theorem \ref{theorem:partfunctionformula}.

\begin{theorem}
With notation as above, it holds that
\[
A_g(G,C) = |G|^{2g-1}|C|\sum_{\lambda}\frac{\chi_{\lambda}(C)}{\chi_{\lambda}(1)^{2g-1}},
\]
where the sum runs over all irreducible $\lambda$ and where $\chi_{\lambda}(C) = \chi_{\lambda}(h)$ for an $h \in C$.
\end{theorem}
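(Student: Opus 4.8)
The plan is to realize $A_g(G,C)$ as a partition function $P_\chi(H)$ for a carefully chosen embedded graph $H$ and class function $\chi$, and then read the formula off from Theorem \ref{theorem:partfunctionformula}. For the graph I would take the connected one-vertex graph $H$ with a single vertex $v$ and $2g$ directed loops $a_1,b_1,\dots,a_g,b_g$, equipped with the rotation system whose cyclic order of half-edges around $v$ is $a_1^{\mathrm{h}},b_1^{\mathrm{h}},a_1^{\mathrm{t}},b_1^{\mathrm{t}},\dots,a_g^{\mathrm{h}},b_g^{\mathrm{h}},a_g^{\mathrm{t}},b_g^{\mathrm{t}}$, where $e^{\mathrm{h}}$ and $e^{\mathrm{t}}$ denote the incoming (head) and outgoing (tail) half-edge of a loop $e$. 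This is the classical polygonal identification of the closed orientable surface $S_g$: it has $|V|=1$ and $|E|=2g$, and by tracing the unique facial walk (of length $4g$) one finds exactly $|F|=1$ face, consistent with the Euler characteristic $1-2g+1=2-2g$. With the sign convention $\sigma_v$ ($+1$ on each head half-edge, $-1$ on each tail half-edge), the orientation-weighted vertex product is $\kappa(\phi_v^{\sigma_v})=\prod_{i=1}^g[x_i,y_i]$, where $x_i=\kappa(a_i)$ and $y_i=\kappa(b_i)$.

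For the class function I would take $\chi=\mathbbm{1}_{C^{-1}}$, the indicator of the conjugacy class $C^{-1}=\{c^{-1}\mid c\in C\}$. Then the summand $\prod_{v}\chi(\kappa(\phi_v^{\sigma_v}))$ equals $1$ precisely when $\prod_{i=1}^g[x_i,y_i]\in C^{-1}$, i.e. when there is a unique $z\in C$ with $\prod_{i=1}^g[x_i,y_i]=z^{-1}$. Since $\kappa$ is freely determined by the tuple $(x_1,\dots,x_g,y_1,\dots,y_g)\in G^{2g}$, sending $\kappa$ to $(x_1,\dots,x_g,y_1,\dots,y_g,z)$ sets up a bijection between the maps contributing $1$ to $P_\chi(H)$ and the tuples counted by $A_g(G,C)$, whence $P_\chi(H)=A_g(G,C)$.

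Next I would compute the multiplicities $m_\lambda$ in the expansion $\chi=\sum_\lambda m_\lambda\chi_\lambda$. By orthonormality of the irreducible characters with respect to the inner product of Section \ref{subsection:repth},
\[
m_\lambda=\langle\chi,\chi_\lambda\rangle=\frac{1}{|G|}\sum_{g\in C^{-1}}\chi_\lambda(g^{-1})=\frac{|C|}{|G|}\chi_\lambda(C),
\]
using that $g\in C^{-1}$ forces $g^{-1}\in C$ and that $|C^{-1}|=|C|$. Substituting $|V|=1$, $|E|=2g$, $|F|=1$, and this value of $m_\lambda$ into Theorem \ref{theorem:partfunctionformula} gives
\[
A_g(G,C)=P_\chi(H)=|G|^{2g}\sum_\lambda\chi_\lambda(1)^{1-2g}\cdot\frac{|C|}{|G|}\chi_\lambda(C)=|G|^{2g-1}|C|\sum_\lambda\frac{\chi_\lambda(C)}{\chi_\lambda(1)^{2g-1}},
\]
which is the desired identity.

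The main obstacle I anticipate is the topological bookkeeping rather than the algebra: one must choose the rotation system so that the cyclic, orientation-weighted product at $v$ is genuinely $\prod_{i=1}^g[x_i,y_i]$, and then verify carefully that the resulting embedded graph has a single face, so that the exponent $|F|-|E|=1-2g$ comes out correctly. Once $H$ and $\chi=\mathbbm{1}_{C^{-1}}$ are fixed, the identification $P_\chi(H)=A_g(G,C)$ and the evaluation of $m_\lambda$ are routine, and Theorem \ref{theorem:partfunctionformula} supplies the rest.
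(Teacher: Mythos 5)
Your proof is correct and follows the paper's argument essentially verbatim: the same one-vertex, $2g$-loop graph with the interleaved rotation system (so that the vertex product is $\prod_{i=1}^g[x_i,y_i]$ and $|F|=1$), and the same application of Theorem \ref{theorem:partfunctionformula} to an indicator-type class function supported on a single conjugacy class. The only cosmetic difference is that you take $\chi=\mathbbm{1}_{C^{-1}}$ where the paper takes $f_C=\frac{|G|}{|C|}\mathbbm{1}_{C}$, which makes $P_\chi(H)=A_g(G,C)$ on the nose and spares you the paper's final substitution $C\mapsto C^{-1}$.
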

\begin{proof}
Consider the following function
\[
f_{C}: G \rightarrow \C, \hspace{2mm} f_{C}(h) = \begin{cases}\frac{|G|}{|C|} \text{ if }h \in C\\ 0 \text{ otherwise.}\end{cases} 
\]
It is clear that $f_{C}$ is a class function. Write $f_{C} = \sum_{\lambda}m_{\lambda}\chi_{\lambda}$. Then
\[
m_{\lambda} = \frac{1}{|G|}\sum_{g \in G}f_{C}(g)\chi_{\lambda}(g^{-1}) = \chi_{\lambda}(C^{-1}),
\]
where $C^{-1}$ is the conjugacy class consisting of the inverse elements of $C$. Let $H$ be a graph with one vertex and $2g$ loops attached to it in such a way that for odd $i$, the $i$-th loop only intersects the $(i+1)$-th loop (once). This graph can be embedded on $S_g$. Theorem \ref{theorem:partfunctionformula} gives 
\begin{equation}\label{equation:eersteged}
P_{f_{C}}(H) = |G|^{2g}\sum_{\lambda}\chi_{\lambda}(1)^{1-2g}\chi_{\lambda}(C^{-1}).
\end{equation}
On the other hand, by definition of the partition function we have
\begin{align}\label{align:tweedeged}
P_{f_{C}}(H) = &\frac{|G|}{|C|}\cdot |\{(x_1,...,x_g,y_1,...,y_g) \in G^{2g} \mid \prod_{i=1}^{g}[x_i,y_i] \in C\}| \nonumber\\
=& \frac{|G|}{|C|}\cdot |\{(x_1,...,x_g,y_1,...,y_g,z) \in G^{2g} \times C^{-1} \mid \prod_{i=1}^{g}[x_i,y_i] = z^{-1}\}|\nonumber \\
=& \frac{|G|}{|C|}A_{g}(G,C^{-1}).
\end{align}
Setting $C \coloneqq C^{-1}$ and comparing equations (\ref{equation:eersteged}) and (\ref{align:tweedeged}) yields the theorem.
\end{proof}

\subsection{Preliminaries on graph embeddings}\label{subsection:prelimge}

In this section we give some preliminaries on graph embeddings. These preliminaries and the notation are used in Sections \ref{section:countflow} and \ref{subsection:colflow}. More information concerning graph embeddings may be found in \cite{gross87}.\\
\indent Let $H$ be a (undirected) graph and $S$ a closed orientable surface. An \textit{embedding} of $H$ into $S$, which is denoted by a map $i: H \rightarrow S$, is an assignment of points and simple arcs in $S$ to vertices and edges in $H$ respectively, satisfying the following conditions: distinct vertices are associated with distinct points; a simple arc representing an edge has as its endpoints the points assigned to the vertices of the corresponding edge, and no other points on the arc; and, finally, the interiors of distinct simple are disjoint.\\
\indent An embedding $i: H \rightarrow S$ is called \textit{cellular} (or $2$-cell) if all connected components of $S\setminus i(H)$ are homeomorphic to an open disc. The following theorem is often attributed to Edmonds \cite{edmonds60}, Heffter \cite{heffter91} and Ringel \cite{ringel65}.

\begin{theorem}\label{theorem:ehr}
Let $H$ be a connected graph and $S$ a closed orientable surface. Then a cellular embedding $i: H \rightarrow S$ induces for every vertex $v$ a cyclic order of the edges incident to $v$. Conversely, giving for each vertex $v$ of $H$ a cyclic order of edges incident to $v$ defines a cellular embedding of $H$ in a closed orientable surface.
\end{theorem}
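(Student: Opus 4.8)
The plan is to handle the two directions separately: the forward direction is a direct consequence of orientability, while the converse requires an explicit construction of the surface from the rotation system.

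For the forward direction, suppose $i: H \rightarrow S$ is a cellular embedding into the oriented surface $S$. First I would observe that each point $i(v)$ has a small open neighborhood homeomorphic to an open disc in $\R^2$, and that the fixed orientation of $S$ singles out a counterclockwise sense of rotation in this disc. The arcs representing the edges incident to $v$ meet $i(v)$ and, having disjoint interiors, cross a small circle around $i(v)$ in a definite cyclic sequence as one rotates counterclockwise. I would take this sequence to be $\phi_v$, noting that a loop at $v$ crosses the circle twice, consistent with the convention that a loop appears twice in the cyclic order. Independence of the chosen neighborhood follows because any two such discs agree on the germ of the orientation at $i(v)$.

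The converse is the substantive part. Given a cyclic order $\phi_v$ at each vertex, I would build $S$ by the classical band (ribbon graph) construction. First I would replace each vertex $v$ by a closed disc $D_v$ and place $d(v)$ marked arcs on $\partial D_v$ in the cyclic order dictated by $\phi_v$, one arc per edge-end at $v$. Next, for each edge $e = uv$ I would attach a band $[0,1]\times[0,1]$ by gluing $\{0\}\times[0,1]$ and $\{1\}\times[0,1]$ to the marked arcs of $D_u$ and $D_v$ corresponding to $e$. Since the target surface is to be orientable, I would glue every band without a twist, i.e., so that fixed orientations on $D_u$, $D_v$ and the band are mutually compatible along both gluing arcs; this is always achievable and makes the resulting compact surface-with-boundary $\Sigma$ orientable. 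By construction $H$ is a deformation retract of $\Sigma$, so $\Sigma$ is connected because $H$ is. The key combinatorial point is then to identify $\partial\Sigma$: tracing a boundary circle amounts to the rule that, entering a vertex disc along an edge $e$, one exits along the edge immediately following $e$ in $\phi_v$, with each directed edge used once. This is exactly the recipe defining a face in the introduction, so the boundary components of $\Sigma$ are in bijection with the facial walks, that is, with $F$. Finally I would cap off each boundary circle with a disc to obtain a closed surface $S$, with orientability preserved by orienting each capping disc compatibly; the complement $S\setminus i(H)$ is then the disjoint union of the interiors of these discs, so every component is an open disc and the embedding is cellular.

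The main obstacle I anticipate is the bookkeeping in the boundary-tracing step: one must check carefully that the face-tracing rule produces closed walks matching the paper's definition of a face, in particular that each edge is traversed at most once in each direction, and that distinct facial walks correspond to distinct boundary components of $\Sigma$. A secondary subtlety is orientability, which requires verifying that the no-twist gluing is globally consistent; I would phrase this as choosing a coherent orientation on all discs and bands simultaneously and confirming that adjacent pieces match along every gluing arc. Once these two verifications are in place, the homeomorphism type of $S$ is determined and the construction inverts the forward direction on the level of rotation systems, completing the correspondence.
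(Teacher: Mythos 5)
The paper does not prove this statement at all: it is quoted as the classical Heffter--Edmonds--Ringel theorem, with references to \cite{edmonds60}, \cite{heffter91}, \cite{ringel65} and background in \cite{gross87}, so there is no internal proof to compare against. Your argument is the standard band-decomposition (ribbon graph) proof of that classical result, and it is essentially correct: the forward direction via the counterclockwise order of edge-ends around $i(v)$ (after fixing an orientation of $S$, on which the rotation system does depend), and the converse via vertex discs, untwisted bands, identification of the boundary circles of the resulting surface-with-boundary $\Sigma$ with the facial walks, and capping with discs. One small imprecision: $S\setminus i(H)$ is not literally the disjoint union of the interiors of the capping discs; each component of $S\setminus i(H)$ is a capping disc together with the adjacent part of $\Sigma\setminus H$, which is a half-open annulus retracting onto the corresponding boundary circle, so the component is still an open disc --- the conclusion stands, but the justification should go through $\Sigma\setminus H$ rather than the caps alone. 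With that repaired, your proof is a complete and standard argument for a result the paper takes as known.
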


\indent Let $H$ be a connected graph with a cyclic order of edges incident with a common vertex. We define the number $g(H)$ to be the genus of the closed orientable surface in which $H$ cellularly embeds by Theorem \ref{theorem:ehr}. This embedding gives a collection $F$ of faces of $H$. It holds that  
\begin{equation}\label{equation:euler}
|V| - |E| + |F| = 2-2g(H).
\end{equation}
The number $|V|-|E|+|F|$ is called the \textit{Euler characteristic} of $H$.\\
\indent Given a cellular embedding $i$ of $H$ into a closed orientable surface $S$, the \textit{dual} (\textit{graph}) $H^*$ of $H$ in $S$ has vertices equal to the faces of $H$ and an edge joins two vertices in $H^*$ if the corresponding faces in $H$ share an edge. The embedding $i$ gives rise to a map $i^*$ that embeds $H^*$ in $S$, called the \textit{dual embedding}.

\subsection{Counting nowhere-identity flows}\label{section:countflow}

Let $G$ be a finite group with identity element $1$. Let $H = (V,E)$ be a connected directed graph with for each vertex $v$ a cyclic order $\phi_v$ of the edges incident to $v$. For each $v \in V$, let $e_v^1$ be an edge incident to $v$ and let $e_v^{i+1}$ be the edge that comes after $e_v^i$ in the cyclic order, for $i = 1,...,d(v)-1$, where $d(v)$ is the total degree of $v$. A \textit{G-flow} on $H$ is a function $\psi: E \rightarrow G$ such that $\prod_{i=1}^{d(v)}\psi(e_v^i)^{\sigma_{v}(e_v^i)} = 1$ (in $G$) for all $v \in V$. A $G$-flow that never takes the identity as value is called a \textit{nowhere-identity G-flow}.\\
\indent When $G$ is abelian the cyclic ordering of edges around vertices of $H$ is not needed to define a $G$-flow of $H$. Tutte proved in \cite{tutte54} that the number of nowhere-zero $G$-flows depends only on the size of $G$. In this section we generalize Tutte's result by deriving a formula that counts the number of nowhere-identity $G$-flows of $H$, where $G$ is any finite group, a result that recently was  found independently by the authors of \cite{goodall16}. This number depends only on the multiset consisting of the dimensions of the irreducible representations of $G$. We first derive a formula for the number of $G$-flows of $H$, denoted by $N_G(H)$.\\
\indent Let $\chi_{\text{reg}}$ be the character of the regular representation of $G$. With notation as in the beginning of the introduction, the partition function is computed to be
\begin{equation}\label{equation:flows1}
P_{\chi_{\text{reg}}}(H) = \sum_{\kappa: E \rightarrow G}\prod_{v \in V}\chi_{\text{reg}}(\kappa(\phi_{v}^{\sigma_{v}})) = |G|^{|V|}\cdot |\{\kappa: E \rightarrow G \mid \kappa(\phi_{v}^{\sigma_{v}}) = 1 \text{ for all }v\}|.
\end{equation}
The condition that $\kappa(\phi_{v}^{\sigma_{v}}) = 1$ for all $v$, is the conditon for $\kappa$ to be a $G$-flow on $H$. Hence $P_{\chi_{\text{reg}}}(H)$ equals $|G|^{|V|}N_{G}(H)$. On the other hand, as $m_{\lambda} = \chi_{\lambda}(1)$ for the character of the regular representation, Theorem \ref{theorem:partfunctionformula} reads
\begin{equation}\label{equation:flows2}
P_{\chi_{\text{reg}}}(H) = |G|^{|E|}\sum_{\lambda}\chi_{\lambda}(1)^{|V|-|E|+|F|} = |G|^{|E|}\sum_{\lambda}\chi_{\lambda}(1)^{2-2g(H)},
\end{equation}
where equation (\ref{equation:euler}) is used in the second equality. Putting together equations (\ref{equation:flows1}) and (\ref{equation:flows2}) yields
\[
N_G(H) = |G|^{|E|-|V|}\sum_{\lambda}\chi_{\lambda}(1)^{2-2g(H)},
\]
for the number of $G$-flows on $H$.\\
\indent Next we derive a formula for the number of nowhere-identity $G$-flows of $H$, denoted by $\tilde{N}_G(H)$. If $A \subset E$ is a subset of the edge set of $H$, let $(V, A)$ denote the subgraph of $H$ where the edges in $E\setminus A$ are removed. Let $\chi$ be a class function of $G$. Write
\[
P_{\chi}(H) = \sum_{B \subset E}\widetilde{P}_{\chi}((V, E\setminus B)), 
\]
where
\[
\widetilde{P}_{\chi}(H) = \sum_{\kappa: E \rightarrow G\setminus\{1\}}\prod_{v \in V}\chi(\kappa(\phi_{v}^{\sigma_{v}})).
\]
Inclusion-exclusion gives
\begin{equation}\label{equation:inclexcl}
\widetilde{P}_{\chi}(H) = \sum_{B \subset E}(-1)^{|E\setminus B|}P_{\chi}((V,B)) = \sum_{B \subset E}(-1)^{|E\setminus B|}\prod_{C}P_{\chi}(C),
\end{equation}
where the product in the last expression runs over components $C$ of the subgraph $(V,B)$ of $H$. In the second equality we used that the partition function is multiplicative with respect to the disjoint union of graphs. Let $V_C, E_C$ and $F_C$ denote the vertex set, edge set and collection of faces of $C$ respectively. Then equation (\ref{equation:inclexcl}) together with Theorem \ref{theorem:partfunctionformula} gives
\begin{equation}\label{equation:nowhere1part}
\widetilde{P}_{\chi}(H) = \sum_{B \subset E}(-1)^{|E\setminus B|}|G|^{|B|}\prod_{C}\sum_{\lambda}\chi_{\lambda}(1)^{|F_C|-|E_C|}m_{\lambda}^{|V_C|},
\end{equation}
where we used that $\prod_{C}|G|^{|E_C|} = |G|^{\sum_{C}|E_C|} = |G|^{|B|}$.

\begin{theorem}\label{theorem:nowhere1flows}
The number $\widetilde{N}_{G}(H)$ of nowhere-identity $G$-flows on $H$ is given by
\[
\widetilde{N}_{G}(H) = \sum_{B\subset E}(-1)^{|E\setminus B|}|G|^{|B|-|V|}\prod_{C}\sum_{\lambda}\chi_{\lambda}(1)^{2-2g(C)},
\]
where the product runs over all components $C$ of the subgraph $(V,B)$ of $H$ and where $2-2g(C)$ denotes the Euler characteristic of $C$.
\end{theorem}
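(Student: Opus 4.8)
The plan is to specialize the already-derived inclusion-exclusion formula (\ref{equation:nowhere1part}) to the regular character $\chi_{\text{reg}}$ and then to divide by an appropriate power of $|G|$. The only inputs needed are (\ref{equation:flows1}), (\ref{equation:nowhere1part}), the decomposition of $\chi_{\text{reg}}$ recorded in Section \ref{subsection:repth}, and the Euler-characteristic identity (\ref{equation:euler}).

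First I would observe that, exactly as in the computation leading to (\ref{equation:flows1}), restricting $\kappa$ to maps $E \rightarrow G\setminus\{1\}$ gives
\[
\widetilde{P}_{\chi_{\text{reg}}}(H) = |G|^{|V|}\cdot\widetilde{N}_G(H).
\]
Indeed, $\chi_{\text{reg}}$ vanishes off the identity and equals $|G|$ at $1$, so the only surviving terms in $\widetilde{P}_{\chi_{\text{reg}}}(H)$ are those $\kappa$ with $\kappa(\phi_v^{\sigma_v}) = 1$ for every $v$; these are precisely the nowhere-identity $G$-flows, each contributing a factor $|G|$ at every vertex.

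Next I would substitute $\chi = \chi_{\text{reg}}$ into (\ref{equation:nowhere1part}). Since $\chi_{\text{reg}}$ decomposes with multiplicities $m_\lambda = \chi_\lambda(1)$, the inner factor for each component $C$ becomes
\[
\sum_\lambda \chi_\lambda(1)^{|F_C|-|E_C|}\,\chi_\lambda(1)^{|V_C|} = \sum_\lambda \chi_\lambda(1)^{|V_C|-|E_C|+|F_C|}.
\]
Applying the Euler-characteristic identity (\ref{equation:euler}) to each connected component $C$ of $(V,B)$ — which inherits the cyclic orders of $H$ and hence its own cellular embedding — yields $|V_C|-|E_C|+|F_C| = 2-2g(C)$, so the inner factor simplifies to $\sum_\lambda \chi_\lambda(1)^{2-2g(C)}$. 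Combining this with the identification of $\widetilde{P}_{\chi_{\text{reg}}}(H)$ above gives
\[
|G|^{|V|}\widetilde{N}_G(H) = \sum_{B \subset E}(-1)^{|E\setminus B|}|G|^{|B|}\prod_C \sum_\lambda \chi_\lambda(1)^{2-2g(C)},
\]
and dividing both sides by $|G|^{|V|}$ produces the claimed formula.

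The only point requiring care is the bookkeeping on components: the subgraph $(V,B)$ retains all vertices of $H$, so its components may include isolated vertices. I would verify that (\ref{equation:euler}) still applies there, an isolated vertex embedding in the sphere with a single face so that $|V_C|-|E_C|+|F_C| = 1-0+1 = 2 = 2-2g(C)$ with $g(C)=0$; such a degenerate component then contributes the factor $\sum_\lambda \chi_\lambda(1)^2 = |G|$, consistent with the partition function of a single vertex. Beyond this routine check I do not anticipate any genuine obstacle, as the argument is a direct specialization of Theorem \ref{theorem:partfunctionformula} through the inclusion-exclusion already set up.
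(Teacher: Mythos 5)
Your proposal is correct and follows the paper's own proof essentially verbatim: specialize the inclusion-exclusion identity (\ref{equation:nowhere1part}) to $\chi_{\text{reg}}$, use $m_{\lambda}=\chi_{\lambda}(1)$ together with the Euler-characteristic identity (\ref{equation:euler}) on each component, and divide the relation $\widetilde{P}_{\chi_{\text{reg}}}(H) = |G|^{|V|}\widetilde{N}_{G}(H)$ by $|G|^{|V|}$. The extra check on isolated-vertex components is a sensible (and correct) piece of bookkeeping that the paper leaves implicit.
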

\begin{proof}
Let $\chi_{\text{reg}}$ be the character of the regular representation of $G$. The observation that
\[
\widetilde{P}_{\chi_{\text{reg}}}(H) = |G|^{|V|}\cdot \widetilde{N}_{G}(H)
\]
combined with equation (\ref{equation:nowhere1part}) for $\chi_{\text{reg}}$ yields the theorem.
\end{proof}

\begin{corollary}\label{corollary:karakterflow}
The number $\widetilde{N}_{G}(H)$ of nowhere-identity $G$-flows on $H$ depends only on $H$ and the multiset consisting of dimensions of the irreducible representations of $G$.
\end{corollary}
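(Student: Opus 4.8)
The plan is to read the result directly off the explicit formula in Theorem \ref{theorem:nowhere1flows} and to verify, ingredient by ingredient, that every quantity appearing there is a function either of the embedded graph $H$ alone or of the multiset of dimensions of the irreducible representations of $G$. Recall that Theorem \ref{theorem:nowhere1flows} gives
\[
\widetilde{N}_{G}(H) = \sum_{B\subset E}(-1)^{|E\setminus B|}|G|^{|B|-|V|}\prod_{C}\sum_{\lambda}\chi_{\lambda}(1)^{2-2g(C)}.
\]
Every occurrence of the group $G$ in the right-hand side must be shown to be recoverable from the multiset $\{\chi_{\lambda}(1)\}_{\lambda}$.

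First I would isolate the purely combinatorial data. The outer sum ranges over subsets $B \subseteq E$, which depends only on $H$; the sign $(-1)^{|E\setminus B|}$ and the exponent $|B|-|V|$ likewise depend only on $H$. For each $B$, the components $C$ of the subgraph $(V,B)$ and their Euler characteristics $2-2g(C)$ are determined by $H$ together with its cyclic orders, since the cyclic orders restrict to each subgraph and thereby determine its faces, and hence its genus through equation (\ref{equation:euler}). None of this data involves $G$.

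Next I would show that the remaining $G$-dependent pieces are functions of the multiset of dimensions. Each factor $\sum_{\lambda}\chi_{\lambda}(1)^{2-2g(C)}$ is manifestly such a function, being a sum over all irreducibles of a fixed integer power of the dimension $\chi_{\lambda}(1)$. The only other occurrence of $G$ is the factor $|G|^{|B|-|V|}$, and here I would invoke the identity $|G| = \sum_{\lambda}\chi_{\lambda}(1)^2$: this follows from the decomposition $\chi_{\text{reg}} = \sum_{\lambda}\chi_{\lambda}(1)\chi_{\lambda}$ of the regular character recorded in Section \ref{subsection:repth}, upon evaluating both sides at the identity, where $\chi_{\text{reg}}(1)=|G|$. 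Hence $|G|$, and therefore $|G|^{|B|-|V|}$ (a well-defined positive rational even when the exponent is negative), is determined by the multiset $\{\chi_{\lambda}(1)\}_{\lambda}$.

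Combining these observations, every term of the formula, and hence the entire sum $\widetilde{N}_{G}(H)$, is a function of $H$ and of the multiset $\{\chi_{\lambda}(1)\}_{\lambda}$ alone, which is the assertion. I do not expect a genuine obstacle here: the only nonobvious point is that $|G|$ itself can be reconstructed from the multiset of dimensions, and this is immediate from $|G| = \sum_{\lambda}\chi_{\lambda}(1)^2$, so the corollary follows purely formally from Theorem \ref{theorem:nowhere1flows}.
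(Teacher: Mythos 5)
Your proposal is correct and follows the paper's own route: the paper proves the corollary by reading it directly off Theorem \ref{theorem:nowhere1flows}, and the one point you elaborate --- that $|G|$ itself is recoverable from the multiset via $|G| = \sum_{\lambda}\chi_{\lambda}(1)^{2}$ --- is exactly the observation the paper records in the remark immediately following the corollary. Your writeup is simply a more explicit version of the same argument.
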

\begin{proof}
This follows directly from Theorem \ref{theorem:nowhere1flows}.
\end{proof}

The dihedral group of order $8$ and the quaternion group are the smallest example of two nonisomorphic groups whose multisets consisting of dimensions of irreducible representations coincide. DeVos \cite[Lemma $6.1.6$]{devos00} argues directly that the number of nowhere-identity $G$-flows is the same when $G$ is the quaternion group and when $G$ is the dihedral group of order $8$. Note that Corollary \ref{corollary:karakterflow} incorporates the dependence on the size of the group, as $|G| = \sum_{\lambda}\chi_{\lambda}(1)^{2}$, where the sum runs over all irreducible representations $\lambda$ of $G$. If $G$ is abelian then $\chi_{\lambda}(1) = 1$ for every $\lambda$, of which there are $|G|$ many. Hence in this case Theorem \ref{theorem:nowhere1flows} reduces to
\[
\widetilde{N}_{G}(H) = \sum_{B \subset E}(-1)^{|E\setminus B|}|G|^{|B|-|V|+c(B)},
\]
where $c(B)$ denotes the number of connected components of the subgraph $(V,B)$ of $H$.

\section{Coloring-flow duality for embedded graphs}\label{section:duality}

\subsection{Preliminaries on coverings}\label{subsection:coverings}

In this section some preliminaries on coverings are given that, together with the preliminaries on graph embeddings in Section \ref{subsection:prelimge}, are used to prove the duality between flows and colorings for embedded graphs in Section \ref{subsection:colflow}. For more background information the reader is referred to \cite{gross87}. Details concerning the universal covering graph may be found in \cite{serre80}. Moreover, this section serves as a setup of notation that is used in Section \ref{subsection:colflow}.\\
\indent The \textit{neighborhood} of a vertex of a graph is the set of vertices adjacent to that vertex. Let $K$ and $H$ be graphs. A \textit{homomorphism} between $K$ and $H$ is a mapping from the vertex set of $K$ to the vertex set of $H$ such that if two vertices are adjacent in $K$, their images are adjacent in $H$. Let $s: K \rightarrow H$ be a homomorphism. Then $s$ is said to be a \textit{covering} of $H$ if it is surjective, and the neighborhood of each vertex of $K$ is in bijection with the neighborhood of its image in $H$ under $s$. If there is a loop at a vertex, then the vertex itself is in its own neighborhood.\\ 
\indent For every connected graph $H$ that is not a tree there exists a covering $t: T \rightarrow H$ with the following properties: $T$ is an infinite tree and if $r: K \rightarrow H$ is another covering of $H$ with $K$ connected, then there exists a covering $s: T \rightarrow K$ such that $r \circ s = t$. The graph $T$ is called the \textit{universal covering graph} of $H$ and is unique up to isomorphism. If $H$ is a tree, its universal covering graph is defined to be $H$ itself.\\
\indent After choosing a vertex $v$ of $H$, the vertices of $T$ can be described as non-backtracking walks in $H$ starting at $v$. A vertex $w$ in $T$ is adjacent to the vertices that correspond to simple extensions of the walk in $H$ that is associated with $w$. The covering $t: T \rightarrow H$ is then given by sending a vertex in $T$ to the endpoint of the walk in $H$ it corresponds to. 

\subsection{Coloring-flow duality}\label{subsection:colflow}

In this section we prove that flows on graphs that are embedded in a closed orientable surface correspond bijectively to specific tensions of a covering of the dual of that graph, where the particular covering is dependent on the flow. By doing so, we generalize Tutte's coloring-flow duality for planar graphs. We use the notation and terminology as introduced in Sections \ref{subsection:prelimge} and \ref{subsection:coverings}. First we need to define the concept of a local and global tension.\\
\indent As in the previous sections, let $G$ be a finite group with identity element $1$ and let $H = (V, E)$ be a connected directed graph with a cyclic order at each vertex. If $W = (e_1,...,e_n)$ is a \textit{walk} in $H$ for some $n \geq 1$, then for a map $\psi : E \rightarrow G$ we define the \emph{height} of $W$ as $h_{\psi}(W) = \prod_{i = 1}^n \psi(e_i)^{\sigma_{W}(e_i)}$, where $\sigma_{W}(e_i) = +1$ if $e_i$ is traversed in the direction of $e_i$ and $-1$ else. A \textit{local} $G$-\textit{tension} on $H$ is a function $\psi : E \rightarrow G$, such that $h_{\psi}(W) = 1$ for every facial walk $W$. A \textit{global} $G$-\textit{tension} is a function $\psi: E \rightarrow G$ such that $h_{\psi}(W) = 1$ for every closed walk $W$.

\begin{lemma}\label{lemma:lemmer}
There is a $|G|$-to-$1$ correspondence between proper $G$-colorings of $H$ and nowhere-identity global $G$-tensions on $H$.
\end{lemma}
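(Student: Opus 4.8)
The plan is to construct an explicit surjection from the set of proper $G$-colorings of $H$ onto the set of nowhere-identity global $G$-tensions, and then to show that each fiber of this surjection is a single orbit of the left-translation action of $G$ on colorings, of size exactly $|G|$. Here I take a \emph{proper $G$-coloring} to be a map $f : V \to G$ with $f(u) \neq f(v)$ whenever $u$ and $v$ are adjacent. Given such an $f$, I would define the candidate tension $\psi_f : E \to G$ by $\psi_f(e) := f(u)^{-1}f(v)$ for an edge $e$ directed from $u$ to $v$.

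First I would verify that $\psi_f$ is a nowhere-identity global $G$-tension. Being nowhere-identity is immediate, since $\psi_f(e) = 1$ would force $f(u) = f(v)$, excluded by properness. For the tension property, consider a closed walk $W = (e_1,\dots,e_n)$ passing through vertices $w_0, w_1, \dots, w_n = w_0$. The key point is that the contribution of the $i$-th step to $h_{\psi_f}(W)$ equals $f(w_{i-1})^{-1} f(w_i)$ regardless of orientation: if $e_i$ runs natively from $w_{i-1}$ to $w_i$ the contribution is $\psi_f(e_i) = f(w_{i-1})^{-1} f(w_i)$, while if it runs the other way the exponent $\sigma_{W}(e_i) = -1$ inverts $\psi_f(e_i) = f(w_i)^{-1} f(w_{i-1})$ to the same value. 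Hence the product telescopes to $f(w_0)^{-1} f(w_n) = 1$.

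For surjectivity I would build an inverse. Fix a base vertex $v_0$; given a global tension $\psi$, for each $v$ choose a walk $W_v$ from $v_0$ to $v$ (possible by connectivity of $H$) and set $f_\psi(v) := h_\psi(W_v)$. Since any two walks from $v_0$ to $v$ differ by a closed walk, on which $h_\psi$ is trivial, and since height is multiplicative under concatenation, $f_\psi(v)$ is independent of the chosen walk. A one-step comparison then gives $\psi(e) = f_\psi(u)^{-1} f_\psi(v)$ for every edge $e = uv$, so $\psi_{f_\psi} = \psi$, and if $\psi$ is nowhere-identity then $f_\psi$ is proper. To identify the fibers, suppose $\psi_f = \psi_{f'}$, i.e. $f(u)^{-1} f(v) = f'(u)^{-1} f'(v)$ for every edge $uv$. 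Writing $g_v := f'(v) f(v)^{-1}$, this rearranges to $g_u = g_v$ for all adjacent $u,v$, so by connectivity $g_v$ is a constant $g \in G$ and $f' = g \cdot f$. Conversely every left translate $g \cdot f$ yields the same tension, and since $V \neq \emptyset$ the translation action is free, so each fiber has exactly $|G|$ elements; this gives the claimed $|G|$-to-$1$ correspondence.

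The step deserving the most care is the consistent handling of noncommutativity: the left-multiplication convention $\psi_f(e) = f(u)^{-1} f(v)$ must be threaded through both the telescoping argument — so that reversing an edge inverts its height contribution correctly, which is where I must confirm $\sigma_W$ matches this convention — and the fiber computation, so that the residual ambiguity is precisely \emph{left} translation and acts freely. Once the convention is pinned down, everything else is routine bookkeeping.
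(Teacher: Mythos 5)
Your proposal is correct and follows essentially the same route as the paper's proof: the coloring is recovered from a tension as the height of a walk from a fixed base vertex, the tension is recovered from a coloring as the "difference" of endpoint colors, and the $|G|$-to-$1$ factor comes from the free translation action of $G$ on colorings. The only differences are cosmetic (you use the left-multiplication convention where the paper uses right, and you write out the telescoping and fiber computations that the paper declares immediate).
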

\begin{proof}
Let $\psi$ be a global $G$-tension on $H$, and let $v$ be a vertex of $H$. Then for a vertex $u$ of $H$, there is a path $W_u$ from $v$ to $u$. Let $\kappa : V(H) \rightarrow G$ be given by $\kappa(u) = h_{\psi}(W_u)$. Because $\psi$ is a global $G$-tension, this is independent of the choice of $W_u$ and hence gives a well-defined function. Furthermore, because $\psi$ is nowhere-identity, we have that $\kappa$ is a proper $G$-coloring. For $g \in G$, the coloring $\kappa_g: V(H) \rightarrow G$ given by $\kappa_g(u) = \kappa(u)\cdot g$ also is a proper $G$-coloring and for $h \in G$ we have $\kappa_g = \kappa_h$ if and only if $g = h$.\\
\indent For the other direction, let $\kappa$ be a proper $G$-coloring of $H$. Then the map $\psi: E(H) \rightarrow G$ defined by $\psi(e) = \kappa(u)\kappa(v)^{-1}$, with $e = uv$ and $u,v \in V(H)$, is nowhere-identity as $\kappa$ is a proper $G$-coloring. It is immediately seen to be a global $G$-tension and the colorings $\kappa_g$ defined above all yield the same $\psi$.
\end{proof}

\indent We use Theorem \ref{theorem:ehr} to cellularly embed $H$ in the closed orientable surface $S$ of genus $g(H)$. Let $H^*$ denote the dual of $H$ in $S$. Any map defined on the edges of $H$ naturally induces a map on the edges of $H^*$ and vice versa. The following lemma is well-known, but we include its short proof.

\begin{lemma}\label{lem:loctens}
There is a $1$-to-$1$ correspondence between $G$-flows on $H$ and local $G$-tensions on $H^*$.
\end{lemma}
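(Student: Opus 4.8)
The plan is to use the canonical bijection between the edges of $H$ and the edges of $H^*$ to transport a map $\psi \colon E \rightarrow G$ to a map $\psi^*$ on $E(H^*)$, and then to check that the $G$-flow conditions on $H$ translate, one vertex at a time, into the local $G$-tension conditions on $H^*$. First I would recall the structure of the dual embedding: each edge $e$ of $H$ is crossed transversally by a unique edge of $H^*$, which I denote $e^*$, giving a bijection $e \leftrightarrow e^*$ of the two edge sets. Using the orientation of $S$, I orient each $e^*$ by the standard quarter-turn convention (rotating the direction of $e$ by a quarter turn in the positive sense of $S$), and I set $\psi^*(e^*) \coloneqq \psi(e)$. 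This is manifestly a bijection between maps $E \rightarrow G$ and maps $E(H^*) \rightarrow G$.

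The crux is to identify the facial walks of $H^*$. Since the vertices of $H^*$ are the faces of $H$ and the faces of $H^*$ are the vertices of $H$, each vertex $v \in V$ corresponds to a face of $H^*$, bounded by exactly those dual edges $e^*$ with $e$ incident to $v$. The essential geometric observation is that the facial walk $W_v$ of $H^*$ around this face visits these dual edges in precisely the cyclic order $\phi_v$, and that, with the orientation convention above, the traversal sign $\sigma_{W_v}(e^*)$ of $e^*$ in $W_v$ equals the incidence sign $\sigma_v(e)$ (an edge directed outward at $v$ is crossed in one sense, an edge directed inward in the other). Establishing this order-and-sign matching directly from the definition of the dual embedding and the orientation of $S$ is the main obstacle. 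It is purely a bookkeeping verification, but one must pin down the quarter-turn convention precisely and check it against the definition of $\sigma_v$; in particular, it is this check that determines the correct convention (as opposed to its mirror image, which would force $\psi^*(e^*) = \psi(e)^{-1}$).

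Granting this identification, the verification is immediate. Fix $v \in V$, and let $f_i \coloneqq (e_v^i)^*$ for $i = 1,\dots,d(v)$ be the dual edges in the order they appear along $W_v$. Then
\[
h_{\psi^*}(W_v) = \prod_{i=1}^{d(v)} \psi^*(f_i)^{\sigma_{W_v}(f_i)} = \prod_{i=1}^{d(v)} \psi(e_v^i)^{\sigma_v(e_v^i)},
\]
so that $h_{\psi^*}(W_v) = 1$ holds if and only if the $G$-flow condition at $v$ holds. As both products are read off around a closed walk, their value is independent of the choice of starting edge up to conjugation, so equality with $1$ is well defined, exactly as noted for $P_\chi$ in the introduction. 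Since every facial walk of $H^*$ arises as some $W_v$ and every vertex of $H$ gives such a walk, $\psi$ is a $G$-flow on $H$ if and only if $\psi^*$ is a local $G$-tension on $H^*$. Because $\psi \mapsto \psi^*$ is a bijection of edge-maps, this yields the desired $1$-to-$1$ correspondence.
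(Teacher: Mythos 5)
Your proposal is correct and takes essentially the same approach as the paper, whose proof is a single sentence stating that the flow condition at a vertex of $H$ corresponds precisely to the height-one condition on the corresponding facial walk of $H^*$; you have simply expanded that sentence into the explicit edge bijection, orientation convention, and order-and-sign bookkeeping.
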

\begin{proof}
The flow condition at a vertex of $H$ corresponds precisely to the condition that the height of the facial walk in $H^*$ corresponding to that vertex, is equal to $1$.
\end{proof}

\begin{theorem}[Coloring-flow duality for planar graphs]\label{theorem:flow-colplanar}
When $H$ is planar, there is a $|G|$-to-$1$ correspondence between proper $G$-colorings of $H^*$ and nowhere-identity $G$-flows on $H$.
\end{theorem}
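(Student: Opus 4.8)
The plan is to chain the two correspondences already in hand and to bridge them with the single fact that distinguishes the planar case: on a graph cellularly embedded in the sphere, local $G$-tensions and global $G$-tensions coincide. Applying Lemma~\ref{lemma:lemmer} to the dual graph $H^*$ produces a $|G|$-to-$1$ correspondence between proper $G$-colorings of $H^*$ and nowhere-identity global $G$-tensions on $H^*$, while Lemma~\ref{lem:loctens} gives a $1$-to-$1$ correspondence between $G$-flows on $H$ and local $G$-tensions on $H^*$. The latter is induced edgewise through the bijection between the edges of $H$ and those of $H^*$ and preserves the value carried by each edge, so it restricts to a bijection between nowhere-identity $G$-flows on $H$ and nowhere-identity local $G$-tensions on $H^*$ (here one uses only that $g\neq 1$ if and only if $g^{-1}\neq 1$). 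Thus, once I know that on $H^*$ the nowhere-identity local tensions are exactly the nowhere-identity global tensions, composing the two correspondences yields the asserted $|G|$-to-$1$ correspondence between proper $G$-colorings of $H^*$ and nowhere-identity $G$-flows on $H$.

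The heart of the proof, and the only place planarity is used, is the claim that local $=$ global for tensions on $H^*$. First I would note that for an arbitrary map $\psi\colon E(H^*)\to G$ the height $h_\psi$ is multiplicative under concatenation of walks, inverts under reversal, and is unchanged by inserting a backtrack $e\,e^{-1}$ (since $\psi(e)\psi(e)^{-1}=1$); after fixing a basepoint $v$ it therefore descends to a group homomorphism $h_\psi\colon \pi_1(H^*,v)\to G$. One inclusion is trivial, as every facial walk is a closed walk, so a global tension is a local tension. For the converse, planarity means $g(H)=0$, so by Theorem~\ref{theorem:ehr} both $H$ and $H^*$ embed cellularly in the sphere $S^2$; regarding $S^2$ as the CW complex with $1$-skeleton $H^*$ and one $2$-cell per face of the dual embedding, van Kampen's theorem identifies $\pi_1(S^2)$ with $\pi_1(H^*,v)$ modulo the normal closure $N$ of the facial walks (based at $v$ along tree paths). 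Since $\pi_1(S^2)$ is trivial, $N$ is all of $\pi_1(H^*,v)$.

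Now if $\psi$ is a local tension, then $h_\psi(W_F)=1$ for every facial walk $W_F$, and conjugating to the basepoint along any path $P$ gives $h_\psi(PW_FP^{-1})=h_\psi(P)\,h_\psi(W_F)\,h_\psi(P)^{-1}=1$; hence the homomorphism $h_\psi$ kills every conjugate of a facial walk, and therefore kills the normal closure $N=\pi_1(H^*,v)$. So $h_\psi(W)=1$ for every closed walk $W$, i.e.\ $\psi$ is a global tension. Because the nowhere-identity condition constrains the values of $\psi$ and not the walks tested, the nowhere-identity local and nowhere-identity global tensions on $H^*$ are literally the same set of maps, which is exactly the bridge needed above.

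I expect the local-equals-global step to be the main obstacle: it is where the topological triviality of $\pi_1(S^2)$ must be turned into a statement about heights of closed walks, and it is the step that genuinely needs nonabelian care, since reducing an arbitrary closed walk to facial walks on the sphere produces conjugates of facial walks rather than plain products. Everything else is bookkeeping, chiefly verifying that the $|G|$-to-$1$ multiplicity of Lemma~\ref{lemma:lemmer} survives composition with the genuine bijection of Lemma~\ref{lem:loctens}.
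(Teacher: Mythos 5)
Your proposal is correct and follows essentially the same route as the paper: compose Lemma~\ref{lemma:lemmer} on $H^*$ with Lemma~\ref{lem:loctens}, with planarity entering only to show that local $G$-tensions on $H^*$ are global. Your van Kampen argument, tracking that reducing a closed walk on the sphere yields conjugates of facial walks, is a careful expansion of the paper's one-line appeal to ``every closed walk is homotopic to a facial walk,'' and is if anything more rigorous on the nonabelian point.
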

\begin{proof}
For planar graphs every closed walk is homotopic to a facial walk. Hence, local $G$-tensions are actually global $G$-tensions, and Lemmas \ref{lemma:lemmer} and \ref{lem:loctens} then give the desired correspondence.
\end{proof}

\indent For non-planar graphs, a local $G$-tension is not necessarily a global $G$-tension. Hence in general there is no coloring-flow duality for embedded graphs. In Theorem \ref{theorem:flowkleur} we will show that a $G$-flow corresponds to a $G$-coloring of a covering of the dual.\\
\indent From now on, a locally bijective homomorphism between connected directed graphs with a cyclic order at each of their vertices, is assumed to respect the directions of the edges and the cyclic orders. Given such a homomorphism $s: K \rightarrow H$ and any $\psi: E(H) \rightarrow G$, the map $\psi$ lifts to a map $\psi_s : E(K) \rightarrow G$, given by $\psi_s(e) \coloneqq \psi(s(e))$ for every $e \in E(K)$. 

\begin{lemma}
Let $s : K \rightarrow H$ be a covering. If $\psi$ is a local $G$-tension on $H$, then $\psi_s$ is a local $G$-tension on $K$. 
\end{lemma}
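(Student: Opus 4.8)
The plan is to show that the image under $s$ of a facial walk of $K$ is a facial walk of $H$ (possibly traversed several times), and then to transport the tension condition back along $s$. First I would fix an arbitrary facial walk $W' = (e_1', \dots, e_n')$ of $K$ and consider its projection $s(W') \coloneqq (s(e_1'), \dots, s(e_n'))$. Since $s$ is a homomorphism it sends a walk to a walk, and since $s$ respects the directions of the edges, each $e_i'$ is traversed forward in $W'$ exactly when $s(e_i')$ is traversed forward in $s(W')$; that is, $\sigma_{W'}(e_i') = \sigma_{s(W')}(s(e_i'))$ for all $i$.

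The key step is to verify that $s(W')$ obeys the face-tracing rule in $H$. Let $e_i', e_{i+1}'$ be two successive edges of $W'$ meeting at a vertex $v'$ of $K$; by definition of a facial walk they are successive in the cyclic order $\phi_{v'}$. Because $s$ is a covering that respects the cyclic orders, the edges incident to $v'$ map bijectively and order-preservingly to the edges incident to $s(v')$, so $s(e_i')$ and $s(e_{i+1}')$ are successive in the cyclic order $\phi_{s(v')}$. Thus $s(W')$ is a closed walk of $H$ in which consecutive edges are consecutive in the relevant cyclic order. Since the face-tracing rule is deterministic — arriving at a vertex along a given edge determines the next edge uniquely as the successor in the cyclic order (the directed-edge, or dart, formulation makes this unambiguous even at loops, which occupy two positions) — the walk $s(W')$ follows the unique face of $H$ through $s(e_1')$, traversing it some integer number $k \geq 1$ of times before closing up.

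It then remains to compute the height. Using the preservation of directions noted above,
\[
h_{\psi_s}(W') = \prod_{i=1}^{n} \psi_s(e_i')^{\sigma_{W'}(e_i')} = \prod_{i=1}^{n} \psi(s(e_i'))^{\sigma_{s(W')}(s(e_i'))} = h_{\psi}(s(W')).
\]
Since $s(W')$ is the facial walk of $H$ through $s(e_1')$ traversed $k$ times and $\psi$ is a local $G$-tension, the height of that facial walk equals $1$, so $h_{\psi}(s(W')) = 1^{k} = 1$. As $W'$ was an arbitrary facial walk of $K$, this gives $h_{\psi_s}(W') = 1$ for every facial walk of $K$, which is precisely the assertion that $\psi_s$ is a local $G$-tension.

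I expect the main obstacle to be the middle paragraph: making precise that the projection of a facial walk is again a facial walk, up to multiple traversal. This is the only place where the covering hypothesis is genuinely used — local bijectivity together with compatibility of $s$ with the cyclic orders guarantees that the combinatorial ``next edge'' operation commutes with $s$, so that the faces of $K$ map onto the faces of $H$. The one technical point to handle with care is the possibility $k > 1$, where a single face of $H$ lifts to a longer facial walk of $K$; the height argument absorbs this automatically since $1^{k} = 1$.
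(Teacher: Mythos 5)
Your argument is correct and is essentially the paper's own proof: project a facial walk of $K$ along $s$, observe that compatibility with the cyclic orders forces the image to trace a single face of $H$ some $k \geq 1$ times, and conclude $h_{\psi_s}(W') = h_{\psi}(s(W')) = 1^{k} = 1$. The only difference is that you spell out the deterministic face-tracing step that the paper leaves as a one-line remark.
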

\begin{proof}
Let $W$ be a walk around a face $\mathcal{F}$ of $K$. Because $s$ respects the local ordering, we know that $W$ maps to a face $s(\mathcal{F})$ of $H$, where $s(W)$ might go around $s(\mathcal{F})$ several times, say $f$ times. Then we have that $h_{\psi_s}(W) = h_{\psi}(s(W)) = h_{\psi}(s(\mathcal{F}))^f = 1$, where the last equality follows from the fact that $\psi$ is a local $G$-tension. 
\end{proof}

Let $\psi$ be a local $G$-tension on $H$ and let $T$ be the universal covering graph of $H$. Fixing a vertex $v \in V(H)$, we can bijectively associate to a vertex of $T$ a non-backtracking walk $W$ in $H$ starting at $v$. Now label this vertex of $T$ with $(e(W), h_\psi(W))$, where $e(W)$ denotes the end vertex of $W$.\\
\indent Let $H_{\psi} \coloneqq T / \mathcal{I}$, where $\mathcal{I}$ is the equivalence relation on the vertices of $T$ in which like-labelled vertices are equivalent and such that the canonical map $T \rightarrow H_{\psi}$ is a locally bijective homomorphism. If a vertex of $T$ has label $(e(W), h_{\psi}(W))$, its image under the map $T \rightarrow H_{\psi}$ is denoted by $[(e(W),h_{\psi}(W))]$.

\begin{lemma}\label{lemma:lemmatje}
With notation as above, let $\ell : H_{\psi} \rightarrow H$ be given by $[(e(W), h_\psi(W))] \rightarrow e(W)$. Then $\ell$ is a covering, and the map $\psi_{\ell}$ is a global $G$-tension on $H_{\psi}$. 
\end{lemma}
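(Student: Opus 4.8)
The plan is to use the factorization $\ell\circ q = t$, where $q\colon T\to H_\psi$ denotes the canonical quotient map (which is a locally bijective homomorphism by the very definition of $H_\psi$) and $t\colon T\to H$ is the universal covering, sending a non-backtracking walk $W$ to its endpoint $e(W)$. Indeed $q$ sends $W$ to $[(e(W),h_\psi(W))]$ and $\ell$ then sends this to $e(W)=t(W)$, so $\ell\circ q=t$. First I would record the easy structural facts. The map $\ell$ is well defined because a vertex of $H_\psi$ is a class of like-labelled vertices of $T$, all sharing the same first coordinate $e(W)$, which is the only datum $\ell$ uses. Since $\ell\circ q=t$ with $t$ a homomorphism and $q$ surjective, $\ell$ is a homomorphism, and surjectivity of $t$ forces surjectivity of $\ell$.

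The first substantial task is local bijectivity of $\ell$. Writing $N(\cdot)$ for the neighborhood of a vertex, I would fix a vertex $x$ of $H_\psi$ and a representative $\tilde x\in T$ with $q(\tilde x)=x$. Local bijectivity of $q$ gives a bijection $q\colon N(\tilde x)\to N(x)$, and the covering $t$ gives a bijection $t\colon N(\tilde x)\to N(\ell(x))$. Because $\ell\circ q=t$ holds in particular on $N(\tilde x)$, the restriction $\ell|_{N(x)}$ equals $t|_{N(\tilde x)}\circ (q|_{N(\tilde x)})^{-1}$, a composite of bijections and hence itself a bijection of $N(x)$ onto $N(\ell(x))$. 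Thus $\ell$ is a surjective, locally bijective homomorphism, i.e.\ a covering.

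For the global tension claim I would track how the second label coordinate changes along a single edge. By the description of adjacency in $T$, an edge $e'$ of $H_\psi$ lying over $e=\ell(e')$ in $H$ joins a vertex labelled $(w_1,g_1)$ to one labelled $(w_2,\,g_1\psi(e)^{\pm1})$, the exponent being $+1$ or $-1$ according to the direction in which $e$ is traversed; since $\psi_\ell(e')=\psi(e)$, crossing $e'$ multiplies the second coordinate by $\psi_\ell(e')^{\sigma}$. Consequently, for any closed walk $\tilde W$ in $H_\psi$ the total accumulated factor is precisely $h_{\psi_\ell}(\tilde W)$; but $\tilde W$ returns to its starting vertex and therefore to the same second coordinate, so this factor is the identity, giving $h_{\psi_\ell}(\tilde W)=1$. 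As this holds for every closed walk, $\psi_\ell$ is a global $G$-tension.

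I expect the main obstacle to be the local bijectivity of $\ell$: one must be careful to invoke that the quotient defining $H_\psi$ is arranged so that $q$ is genuinely locally bijective, and then transport bijectivity of neighborhoods cleanly through the factorization $\ell\circ q=t$. The tension computation, by contrast, is essentially forced once one observes that the second label coordinate is a running record of the height $h_\psi$ from the base vertex, so that traversing a closed walk must cancel it out.
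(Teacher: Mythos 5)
Your proof is correct and follows essentially the same route as the paper: both exploit that the second label coordinate records the running height $h_\psi$, so that a closed walk in $H_\psi$ must have trivial height, and both reduce the covering property to the local structure inherited from $T$. Your two small variations --- obtaining local bijectivity of $\ell$ by composing the neighborhood bijections of $q$ and $t$ through the factorization $\ell\circ q=t$, and checking the tension condition edge-by-edge at an arbitrary base vertex instead of first treating walks based at $[(v,1)]$ and then conjugating by a path --- are harmless streamlinings of the paper's argument.
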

\begin{proof}
The map $\ell$ is clearly well-defined and surjective. We show that it is a covering. For a vertex of $H_{\psi}$ labeled by $[(e(W), h_\psi(W))]$, we can extend the walk $W$ in $H$ exactly to the neighbors of $e(W)$. If these neighbors are different from $e(W)$, then they correspond to different vertices in $H_{\psi}$. By our definition of neighborhood (see Section \ref{subsection:coverings}), this also works in case there is a loop at $e(W)$. This shows that the neighborhood of $v$ with label $[(e(W), h_\psi(W))]$ in $H_{\psi}$ is equal to that of $e(W)$ in $H$.\\
\indent To show that $\psi_{\ell}$ is a global $G$-tension, let $W$ be a closed walk in $H_{\psi}$ starting at $[(v,1)]$. If $e(W) =[(e(\ell(W)), h_\psi(\ell(W))]$, then we have that $h_{\psi_{\ell}}(W) = h_\psi(\ell(W))$. Hence for any closed walk starting at $[(v,1)]$ it holds that $h_{\psi_{\ell}}(W) = 1$.\\
\indent If $W$ is a closed walk starting at a vertex $[(u,h)]$, with $u \in V(H)$ and $h \in G$, different from $[(v,1)]$, we can conjugate it with a path $P$ from $[(u,h)]$ to $[(v,1)]$. It is now easy to see that this walk also has $h_{\psi_{\ell}}(W) = 1$, as $1 = h_{\psi_{\ell}}(PWP^{-1}) = h_{\psi_{\ell}}(P) h_{\psi_{\ell}}(W) h_{\psi_{\ell}}(P)^{-1} $. 
\end{proof}

\begin{theorem}\label{theorem:flowkleur}
A nowhere-identity $G$-flow $\psi$ on $H$ gives rise to a proper $G$-coloring of $H^*_{\psi^*}$, where $\psi^*$ is the dual of $\psi$.
\end{theorem}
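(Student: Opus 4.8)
The plan is to chain together the three correspondences from Lemmas \ref{lem:loctens}, \ref{lemma:lemmatje} and \ref{lemma:lemmer}, tracking the nowhere-identity condition at every stage. First I would apply Lemma \ref{lem:loctens}: the nowhere-identity $G$-flow $\psi$ on $H$ corresponds to its dual $\psi^*$, a local $G$-tension on $H^*$. Because the edges of $H$ and $H^*$ are in natural bijection and $\psi^*$ carries the same group value on corresponding edges, the hypothesis that $\psi$ is nowhere-identity gives at once that $\psi^*$ is a \emph{nowhere-identity} local $G$-tension on $H^*$.

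Next I would feed $\psi^*$ into the covering construction preceding Lemma \ref{lemma:lemmatje}, now applied to $H^*$ in the role of $H$. This yields the covering $\ell: H^*_{\psi^*} \to H^*$, and by Lemma \ref{lemma:lemmatje} the lift $(\psi^*)_{\ell}$ is a global $G$-tension on $H^*_{\psi^*}$. To proceed I would check that this tension remains nowhere-identity: by definition $(\psi^*)_{\ell}(e) = \psi^*(\ell(e))$ for each edge $e$ of $H^*_{\psi^*}$, so every value of $(\psi^*)_{\ell}$ is a value of $\psi^*$ and is therefore different from $1$. Thus $(\psi^*)_{\ell}$ is a nowhere-identity global $G$-tension.

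Finally I would invoke Lemma \ref{lemma:lemmer}, but applied to $H^*_{\psi^*}$ rather than to $H$. The $|G|$-to-$1$ correspondence there converts the nowhere-identity global $G$-tension $(\psi^*)_{\ell}$ into a proper $G$-coloring of $H^*_{\psi^*}$, which is precisely the asserted coloring. The step needing the most care, and which I expect to be the main (if modest) obstacle, is justifying that Lemma \ref{lemma:lemmer}, originally stated for the fixed graph $H$, applies verbatim to the flow-dependent covering $H^*_{\psi^*}$: one must note that $H^*_{\psi^*}$ is connected, being a quotient of the connected universal covering tree of $H^*$, and that it inherits directions and a cyclic order at each vertex from $H^*$ through the locally bijective, cyclic-order-respecting covering $\ell$, so that its facial and closed walks are defined and the lemma's proof goes through unchanged.
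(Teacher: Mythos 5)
Your proposal is correct and follows exactly the route of the paper, whose proof of Theorem \ref{theorem:flowkleur} is simply ``Combine Lemmas \ref{lemma:lemmer}, \ref{lem:loctens} and \ref{lemma:lemmatje}''; you have merely spelled out the chain (flow $\to$ local tension on $H^*$ $\to$ global tension on the covering $H^*_{\psi^*}$ $\to$ proper coloring) and verified that the nowhere-identity property survives each step, which is the intended argument.
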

\begin{proof}
Combine Lemmas \ref{lemma:lemmer}, \ref{lem:loctens} and \ref{lemma:lemmatje}.
\end{proof}

\noindent Theorem \ref{theorem:flowkleur} can be strengthened to a duality type theorem for embedded graphs (Theorem \ref{theorem:duality}) that generalizes coloring-flow duality for planar graphs. In order to do so, we introduce some terminology.\\
\indent Let $\psi$ be a local $G$-tension on $H$ and $s: K \rightarrow H$ a covering, then we say that $s$ is a \emph{global covering with respect to $\psi$} if $\psi_s$ is a global $G$-tension on $K$. A global covering $p: K \rightarrow H$ with respect to $\psi$ is called \textit{minimal} if for every global covering $r: K' \rightarrow H$ with respect to $\psi$, there is a covering $q: K' \rightarrow K$ such that $p \circ q = r$. 

\begin{proposition}\label{proposition:propper}
Let $\psi$ be a local $G$-tension on $H$. The map $\ell: H_{\psi} \rightarrow H$ from Lemma \ref{lemma:lemmatje} is a minimal global covering with respect to $\psi$.
\end{proposition}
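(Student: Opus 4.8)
The plan is to derive minimality directly from the universal property of the universal covering graph $T$ of $H$. Lemma \ref{lemma:lemmatje} already gives that $\ell : H_\psi \to H$ is a covering and that $\psi_\ell$ is a global $G$-tension, so $\ell$ is a global covering with respect to $\psi$. What remains is the universal (minimality) property: given an arbitrary global covering $r : K' \to H$ with respect to $\psi$ (so $K'$ is connected and $\psi_r = \psi \circ r$ is a global $G$-tension on $K'$), I must produce a covering $q : K' \to H_\psi$ with $\ell \circ q = r$.

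First I would fix the three relevant maps. Write $t : T \to H$ for the universal covering and $\pi : T \to H_\psi$ for the canonical quotient $T \to T/\mathcal{I}$. By the definition of $\ell$ we have $\ell \circ \pi = t$: a vertex of $T$ given by the walk $W$ maps under $\pi$ to $[(e(W), h_\psi(W))]$ and then under $\ell$ to $e(W) = t(W)$. Since $r$ is a covering with $K'$ connected, the universal property of $T$ supplies a covering $s : T \to K'$ with $r \circ s = t$. The idea is then to divide $\pi$ by $s$: I would define $q$ on a vertex $k \in V(K')$ by picking $w \in V(T)$ with $s(w) = k$ (possible as $s$ is surjective) and setting $q(k) \coloneqq \pi(w)$; that is, if $w$ corresponds to the walk $W$ in $H$, then $q(k) = [(e(W), h_\psi(W))]$.

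The crux, and the step I expect to be the main obstacle, is well-definedness of $q$: if $s(w) = s(w')$ I must show $\pi(w) = \pi(w')$, which means matching the two labels. Matching of endpoints is immediate, since applying $r$ gives $t(w) = r(s(w)) = r(s(w')) = t(w')$, hence $e(W) = e(W')$. Matching of heights is where the global-tension hypothesis on $K'$ enters. I would take the non-backtracking walks in $T$ from the base vertex $\tilde v$ to $w$ and to $w'$; these project under $t$ to $W$ and $W'$, and under $s$ to walks $\widetilde W, \widetilde W'$ in $K'$ with common start $s(\tilde v)$ and common end $k$. Thus $\widetilde W (\widetilde W')^{-1}$ is a closed walk in $K'$, and since $\psi_r$ is a global $G$-tension its height is $1$, so $h_{\psi_r}(\widetilde W) = h_{\psi_r}(\widetilde W')$. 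As $r(\widetilde W) = W$ and $r(\widetilde W') = W'$, the lift identity $h_{\psi_r}(\cdot) = h_\psi(r(\cdot))$ turns this into $h_\psi(W) = h_\psi(W')$, so the labels coincide and $q$ is well-defined.

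Finally I would record the routine verifications. The relation $\ell \circ q = r$ holds because $\ell(q(k)) = \ell(\pi(w)) = t(w) = r(s(w)) = r(k)$. That $q$ is a covering follows formally from $q \circ s = \pi$ with $s$ and $\pi$ both coverings: on the neighborhood of $k = s(w)$ the map $s$ restricts to a bijection onto that neighborhood and $\pi$ restricts to a bijection onto the neighborhood of $\pi(w) = q(k)$, so $q$ on this neighborhood is the composite of these two bijections, hence bijective; surjectivity of $q$ is inherited from that of $\pi$, and $q$ respects directions and cyclic orders because $s$ and $\pi$ do. This yields minimality and completes the proof. (If $H$ is a tree the statement is degenerate: $T = H$, every closed walk has trivial height so local and global tensions coincide and $H_\psi \cong H$, and the argument applies verbatim with $t = \mathrm{id}$.)
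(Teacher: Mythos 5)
Your proof is correct and takes essentially the same route as the paper's: both construct the comparison map into $H_{\psi}$ by labelling the vertex reached by a walk $W$ from a base point with $(e(\cdot),h_{\psi}(\cdot))$ and reduce everything to the well-definedness of that label, which is exactly where the global-tension hypothesis on the competing covering enters. The only cosmetic difference is that you factor through the universal cover $T$ and ``divide'' $\pi$ by $s$, whereas the paper labels vertices of $K$ directly by walks in $K$; your version is somewhat more explicit in verifying that the resulting map is a covering and that the triangle commutes.
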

\begin{proof}
That $\ell$ is a global covering is a reformulation of Lemma \ref{lemma:lemmatje}. To prove minimality of $\ell$, let $p: K \rightarrow H$ be a global covering and let $W$ be a walk in $K$ starting at a vertex $u$, such that $p(u) = v$. We can label the end vertex of $W$ in $K$ with $(e(p(W)), h_{\psi}(p(W)))$ (this is well-defined because of Lemma \ref{lemma:lemmatje}). Now let $r: K \rightarrow H_{\psi}$ be such that it maps a vertex labeled $(e(p(W)), h_{\psi}(p(W)))$ to $[(e(p(W)), h_{\psi}(p(W)))]$. The map $r$ is surjective, because we can lift any walk in $H$ to a walk in $K$ with the same height, so any labeling giving a vertex $[(e(W), h_{\psi}(W))]$ in $H_{\psi}$, can be obtained as a labeling in $K$. Because the labeling is well-defined, we are done. 
\end{proof}

\indent A \textit{global covering tension} on $H$ consists of a covering $s: K \rightarrow H$ and a global $G$-tension $\psi$ on $K$ with the following properties: for every $e \in E(H)$ it holds that $\psi(e_1) = \psi(e_2)$ for all $e_1,e_2 \in s^{-1}(e)$, and the map $\psi \circ s^{-1}$ on edges of $H$ (which is well-defined by the first property) is a local $G$-tension. A global covering tension with covering $s$ and global $G$-tension $\psi$ is called \textit{minimal} if $s$ is a minimal global covering with respect to $\psi$. 

\begin{theorem}\label{theorem:duality}
A $G$-flow on $H$ gives rise to a minimal global covering tension on $H^*$ and vice versa.
\end{theorem}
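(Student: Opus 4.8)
The plan is to exhibit the correspondence as a pair of mutually inverse maps, assembled entirely from the lemmas and proposition already proved, with the only genuine work being the verification that one of the two composites is the identity. Throughout I regard global covering tensions on $H^*$ up to the natural notion of isomorphism: two coverings $s : K \to H^*$ and $s' : K' \to H^*$ carrying global $G$-tensions are identified when there is an isomorphism $K \to K'$ respecting directions and cyclic orders, commuting with the projections to $H^*$, and intertwining the two tensions.

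First I would build the forward map. Given a $G$-flow $\phi$ on $H$, Lemma \ref{lem:loctens} produces a local $G$-tension $\psi \coloneqq \phi^*$ on $H^*$. Proposition \ref{proposition:propper} then supplies the minimal global covering $\ell : H^*_{\psi} \to H^*$ with respect to $\psi$, and Lemma \ref{lemma:lemmatje} guarantees that the lift $\psi_\ell$ is a global $G$-tension on $H^*_\psi$. I claim $(\ell, \psi_\ell)$ is a minimal global covering tension. The first defining property is immediate, since for $e_1, e_2 \in \ell^{-1}(e)$ both $\psi_\ell(e_1)$ and $\psi_\ell(e_2)$ equal $\psi(e)$ by the definition of the lift; the same computation shows $\psi_\ell \circ \ell^{-1} = \psi$, the local $G$-tension we started with, which is the second property. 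Minimality is exactly the content of Proposition \ref{proposition:propper} applied to $\psi_\ell \circ \ell^{-1} = \psi$.

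Next I would build the reverse map. Given a minimal global covering tension $(s, \psi)$ on $H^*$, its definition provides that $\bar\psi \coloneqq \psi \circ s^{-1}$ is a local $G$-tension on $H^*$; applying Lemma \ref{lem:loctens} in the other direction yields a unique $G$-flow on $H$. Composing forward then reverse is harmless: starting from $\phi$ one obtains $(\ell, \psi_\ell)$, whose associated local tension is $\psi_\ell \circ \ell^{-1} = \phi^*$, and the $1$-to-$1$ correspondence of Lemma \ref{lem:loctens} returns $\phi$.

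The hard part will be showing that the reverse map followed by the forward map is the identity, i.e. that an arbitrary minimal global covering tension $(s, \psi)$ on $H^*$ is isomorphic to the canonical one $(\ell, \bar\psi_\ell)$ attached to $\bar\psi = \psi \circ s^{-1}$. Both $s : K \to H^*$ and $\ell : H^*_{\bar\psi} \to H^*$ are minimal global coverings with respect to $\bar\psi$, so minimality of $\ell$ yields a covering $q : K \to H^*_{\bar\psi}$ with $\ell \circ q = s$, and minimality of $s$ yields $q' : H^*_{\bar\psi} \to K$ with $s \circ q' = \ell$. Then $\ell \circ (q \circ q') = \ell$ and $s \circ (q' \circ q) = s$, so $q \circ q'$ and $q' \circ q$ are self-coverings of connected graphs lying over the respective projections, hence automorphisms; this forces $q$ to be an isomorphism. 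It remains to check that $q$ intertwines the tensions: both $\psi$ and $\bar\psi_\ell$ are lifts of the single local tension $\bar\psi$ along $s$ respectively $\ell$, so $\psi = \bar\psi \circ s = \bar\psi \circ \ell \circ q = \bar\psi_\ell \circ q$ edge by edge, which is precisely the intertwining condition. I expect the one point needing genuine care to be the claim that a self-covering of a connected graph lying over the identity must be an automorphism rather than a proper covering; this is the standard uniqueness-of-minimal-object argument, and it is exactly what makes the reverse-then-forward composite collapse to the identity.
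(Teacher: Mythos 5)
Your proposal is correct, and its core is exactly the paper's argument: the forward direction is Lemma \ref{lem:loctens} followed by Lemma \ref{lemma:lemmatje} and Proposition \ref{proposition:propper}, and the reverse direction is the observation that the induced map $\psi \circ s^{-1}$ is by definition a local $G$-tension on $H^*$, hence a $G$-flow on $H$ via Lemma \ref{lem:loctens}. The paper stops there, reading ``gives rise to \dots and vice versa'' as only requiring each construction to output an object of the other type. Where you genuinely go beyond the paper is in proving that the two constructions are mutually inverse up to isomorphism of covering tensions --- in particular that every minimal global covering tension is isomorphic to the canonical one $(\ell, \bar\psi_\ell)$ built from its induced local tension. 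That extra argument is sound: it is the standard uniqueness-of-minimal-object argument, and the intertwining identity $\psi = \bar\psi \circ s = \bar\psi_\ell \circ q$ follows from the fiber-constancy clause in the definition of a global covering tension. Two small points deserve care. First, you invoke ``self-coverings of connected graphs are automorphisms,'' but the paper's definition of covering does not require $K$ to be connected; what saves you is finiteness --- $H^*_{\bar\psi}$ has at most $|V(H^*)|\cdot|G|$ vertices, the surjection $q' : H^*_{\bar\psi} \to K$ forces $K$ finite, and a surjective self-map of a finite vertex set is a bijection, so both composites are automorphisms without any connectedness hypothesis on $K$. Second, you tacitly resolve an ambiguity in the paper's definition of ``minimal global covering tension'' (minimality is stated ``with respect to $\psi$,'' the tension on the cover, but the notion is only defined for local tensions on the base) by reading it as minimality with respect to $\psi \circ s^{-1}$; that is the only sensible reading and the one the paper intends, but it is worth stating explicitly. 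Your strengthened version buys an actual bijection between $G$-flows on $H$ and isomorphism classes of minimal global covering tensions on $H^*$, which is what one would want from a duality theorem and which the paper's one-line ``conversely'' does not deliver.
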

\begin{proof}
Let $\psi$ be a $G$-flow on a graph $H$ embedded on a closed orientable surface $S$. By Lemma \ref{lem:loctens} the map $\psi^*$ is a local $G$-tension on $H^*$, the dual of $H$ in $S$. By Lemma \ref{lemma:lemmatje} and Proposition \ref{proposition:propper} we know that the global covering tension consisting of the covering $\ell$ and global $G$-tension $\psi_{\ell}$ is a minimal global covering. Conversely, we clearly have that a minimal global covering gives a local $G$-tension on $H^*$ and hence a $G$-flow on $H$.
\end{proof}

\begin{corollary}
When $H$ is a plane graph, the correspondence in Theorem \ref{theorem:duality} yields coloring-flow duality for planar graphs.
\end{corollary}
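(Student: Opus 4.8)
The plan is to show that in the planar case the minimal global covering appearing in Theorem \ref{theorem:duality} collapses to the identity covering, so that Theorem \ref{theorem:duality} specializes word-for-word to the $|G|$-to-$1$ correspondence of Theorem \ref{theorem:flow-colplanar}. First I would record that if $H$ is a plane graph then $g(H) = 0$ and the dual $H^*$ is again a plane graph, embedded in the same sphere. As was already observed in the proof of Theorem \ref{theorem:flow-colplanar}, the sphere is simply connected, so every closed walk in $H^*$ is homotopic to a concatenation of facial walks; consequently every local $G$-tension on $H^*$ is automatically a global $G$-tension.

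Next I would trace what Theorem \ref{theorem:duality} produces in this situation. Starting from a nowhere-identity $G$-flow $\psi$ on $H$, Lemma \ref{lem:loctens} gives the local $G$-tension $\psi^*$ on $H^*$, which by the previous paragraph is in fact a global $G$-tension. The key point is then that $H^*_{\psi^*} \cong H^*$ and that $\ell$ is the identity covering. To see this, I would examine the labels $(e(W), h_{\psi^*}(W))$ used in Lemma \ref{lemma:lemmatje}: since $\psi^*$ is global, for any two walks $W$ and $W'$ in $H^*$ from the fixed base vertex $v$ to the same end vertex one has $h_{\psi^*}(W) = h_{\psi^*}(W')$, because the closed walk $W(W')^{-1}$ has height $1$. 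Hence the second coordinate of each label is determined by the first, the equivalence relation $\mathcal{I}$ identifies all walks sharing an endpoint, and the quotient $T/\mathcal{I}$ has exactly one vertex over each vertex of $H^*$. Thus $\ell$ is an isomorphic copy of the identity map, and by Proposition \ref{proposition:propper} it is the minimal global covering with respect to $\psi^*$.

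With this collapse in hand, the minimal global covering tension on $H^*$ furnished by Theorem \ref{theorem:duality} consists of the identity covering of $H^*$ together with the global $G$-tension $\psi^*$; conversely such data is nothing but a global $G$-tension on $H^*$. Restricting to nowhere-identity flows $\psi$ yields nowhere-identity global $G$-tensions $\psi^*$, and I would finish by applying Lemma \ref{lemma:lemmer}, which provides a $|G|$-to-$1$ correspondence between proper $G$-colorings of $H^*$ and nowhere-identity global $G$-tensions on $H^*$. Composing these correspondences recovers precisely the $|G|$-to-$1$ correspondence between proper $G$-colorings of $H^*$ and nowhere-identity $G$-flows on $H$ asserted in Theorem \ref{theorem:flow-colplanar}.

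The main obstacle I anticipate is the verification that the minimal global covering really is the identity, that is, that the construction $T/\mathcal{I}$ of Lemma \ref{lemma:lemmatje} genuinely collapses onto $H^*$ rather than onto some nontrivial cover. This hinges entirely on the globality of $\psi^*$, which in turn rests on the simple connectivity of the sphere; once that is pinned down, the remaining bookkeeping, matching the $|G|$-to-$1$ factor coming from Lemma \ref{lemma:lemmer} with the statement of Theorem \ref{theorem:flow-colplanar}, is routine.
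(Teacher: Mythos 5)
Your proposal is correct and follows essentially the same route as the paper: use simple connectivity of the sphere to upgrade the local $G$-tension $\psi^*$ to a global one, deduce that walks with common endpoints have equal height so that the covering $H^*_{\psi^*} \rightarrow H^*$ collapses to an isomorphism, and then invoke Lemma \ref{lemma:lemmer}. Your extra detail on why the labels $(e(W), h_{\psi^*}(W))$ force exactly one vertex of $T/\mathcal{I}$ over each vertex of $H^*$ is a welcome elaboration of a step the paper states more tersely, but it is not a different argument.
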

\begin{proof}
If $H$ is a plane graph and $\psi$ a $G$-flow of $H$, then $H^*$ is a plane graph and $\psi^*$ is a local $G$-tension on $H^*$. Because all walks are homotopic in a plane graph, we have that $\psi^*$ is a global $G$-tension. Hence, for two walks $W$ and $W'$ in $H^*$, both starting at a vertex $v$ and both ending in a vertex $w$, we have that $h_{\psi^*}(W) = h_{\psi^*}(W')$. In particular, this means that the covering map $H_{\psi^*}^* \rightarrow H^*$ is an isomorphism, showing that a $G$-flow on $H$ gives a global $G$-tension on $H^*$ and vice versa. Applying Lemma \ref{lemma:lemmer} finishes the proof.
\end{proof}

The minimality of the global covering tension found in Theorem \ref{theorem:duality} can also be formulated in terms of a universal property and it seems natural to further investigate the connections with algebraic topology. We also remark that the covering graph $H_{\psi}$ can be interpreted as the derived graph of a voltage graph, see \cite{gross87}. In a future paper we plan to address this interpretation and we want to investigate other properties of $H_{\psi}$. In \cite{NesSam} the authors count tension-continuous mappings, i.e., maps between graphs such that a tension on the codomain lifts to a tension on the domain. It would be interesting to generalize this concept to local-tension-continuous mappings for embedded graphs and to consider the corresponding counting problem.\\

\noindent \textit{Acknowledgements.} The authors would like to thank Sven Polak, Guus Regts and Lex Schrijver for useful discussions and we thank the referees for many helpful suggestions improving the presentation of the paper.

\end{document}